\documentclass[a4paper,draft,reqno,12pt]{amsart}
\usepackage[english]{babel}
\usepackage{amsmath}
\usepackage{amssymb}
\usepackage{amscd}
\usepackage{amsthm}
\usepackage{euscript}

\usepackage{mathrsfs}

\usepackage{graphicx}

\newtheorem{lemma}{Lemma}
\newtheorem{theorem}{Theorem}

\newtheorem{corollary}{Corollary}
\theoremstyle{definition}
\newtheorem{definition}{Definition}
\newtheorem{ex}{Example}
\theoremstyle{remark}
\newtheorem {re}{Remark}

\DeclareMathOperator{\Aut}{Aut}
\DeclareMathOperator{\SAut}{SAut}

\DeclareMathOperator{\GL}{GL}

\def\Ker{{\rm Ker}\,}

\def\CC{{\mathbb C}}
\def\KK{{\mathbb K}}

\def\ZZ{{\mathbb Z}}

\def\NN{{\mathbb N}}
\def\QQ{{\mathbb Q}}

\sloppy
\begin{document}

\date{}

\author{Ilya Boldyrev}
\address{Lomonosov Moscow State University, Faculty of Mechanics and Mathematics, Department of Higher Algebra, Leninskie Gory 1, Moscow, 119991 Russia; }\email{anesecer@gmail.com}

\author{Sergey Gaifullin}
\address{Lomonosov Moscow State University, Faculty of Mechanics and Mathematics, Department of Higher Algebra, Leninskie Gory 1, Moscow, 119991 Russia; \linebreak and \linebreak
National Research University Higher School of Economics, Faculty of Computer Science, Pokrovsky Boulevard 11, Moscow, 109028, Russia}
\email{sgayf@yandex.ru}

\title{Automorphisms of nonnormal toric varieties}

\markboth{Ilya Boldyrev, Sergey Gaifullin}{Automorphisms of nonnormal toric varieties}

\subjclass[2020]{Primary 14R30,  14J50; Secondary 13A50, 14L30.}

\keywords{Toric variety, automorphism, flexible varieties, rigid varieties, locally nilpotent derivation.}

\thanks{The second author was supported by RSF grant 19-11-00172.}

\maketitle

\begin{abstract}
In this paper we prove criteria for a nonnormal toric variety to be flexible, to be rigid and to be almost rigid. For rigid and almost rigid toric varieties we describe the automorphism group explicitly.
\end{abstract}


\section{Introduction}
Let $\KK$ be an algebraically closed field of characteristic zero.We denote the additive group of the field $\KK$ by $\mathbb{G}_a$. Let us consider an irreducible affine algebraic variety $X$ over $\KK$. If an algebraic torus $T\cong(\KK^\times)^n$ acts on $X$ with an open orbit, then $X$ is called {\it toric}. Often by toric  variety one mean a normal variety. But we do not suppose $X$ to be normal. 

We want to investigate the automorphism group of $\mathrm{Aut}(X)$. In \cite{AFKKZ} the subgroup $\SAut(X)\subset\Aut(X)$ of special automorphisms  is defined. By definition the subgroup $\SAut(X)$ is generated by all algebraic subgroups isomorphic to $\mathbb{G}_a$. A point $x\in X$ is called {\it flexible}, if the tangent space $T_xX$ is generated by tangent vectors to orbits of $\mathbb{G}_a$-subgroups. Flexibility of $x$ is equivalent to openness of the orbit $\SAut(X)x$  in $X$. If all regular points on $X$ are flexible, the variety $X$ is called {\it flexible}. Recall that an action of a group $G$ on a set is called {\it infinitely transitive}, if for each two finite collectns $(a_1,\ldots,a_m), a_i\neq a_j$  and $(b_1, \ldots, b_m), b_i\neq b_j$ with coinciding number of elements there exists an element $g\in G$ such that $g\cdot a_i=b_i$ for all $1\leq i\leq m$.
The interest in flexible varieties is largely due to the following theorem.

{\bf Theorem. \cite[Theorem 0.1]{AFKKZ}}   {\it Let $X$ be an irreducible affine variety of dimension $\geq 2$. Then the following conditions are equivalent. 

(i) The group $\SAut(X)$  acts transitively on $X^{reg}$.

(ii) The group $\SAut(X)$  acts infinitely transitively on $X^{reg}$. 

(iii) The variety $X$ is flexible.
}

There are a lot of works proving flexibility of some classes of varieties, for example \cite{AFKKZ}, \cite{AKZ}, \cite{Pe}, \cite{Sh}, \cite{GSh}. One of the first example of flexible varieties is nondegenerate normal toric varieties, see~\cite{AKZ}. Recall that a toric variety is called nondegenerate if it does not admit nonconstant invertible regular functions. If we remove the condition of normality, a toric variety can be nonflexible. For example, the toric curve $\{x^2=y^3\}$ is not flexible. The main result of this paper is a criterium of flexibility for not necessary normal toric varieties. The answer is given both in combinatorial terms, see Theorem~\ref{gib} and Corollaries~\ref{vt} and~\ref{dy}, and  in geometrical terms, see Corollary~\ref{geo}. Note that Theorem~\ref{gib} is a generalization of the rezult~\cite[Theorem~2.1]{AKZ}. Indeed, in case of normal toric variety Theorem~\ref{gib} gives the condition that the variety is nondegenerate.

For flexible varieties the group $\SAut(X)$ is big and it acts on $X^{reg}$ transitively. In some sense  the opposite situation occurs when the group $\SAut(X)$ is trivial. In this case $X$ is called {\it rigid}. If $X$ is a rigid variety, then there exists a unique maximal algebraic torus in the automorphism group $\Aut(X)$. Sometimes it is possible to describe the automorphism group of $X$ explicitly, see \cite{AG}, \cite{Ga}. In this paper we prove a criterium of rigidity for toric varieties, see Theorem~\ref{zhes}. And we obtain an explicit description of automorphism groups of rigid toric varieties, see Theorem~\ref{autzhes}. The automorphism group of a rigid toric variety $X$ is isomorphic to a semidirect product of the torus $T$ and a discrete subgroup $S(X)$, which we call {\it the group of symmetries of weight monoid} of $X$, see Definition~\ref{ops}. If the variety $X$ does not admit nonconstant invertible functions, then the subgroup $S(X)$ is finite. So the automorphism group of a rigid toric variety without nonconstant invertible functions is a finite extension of the torus $T$. 

We describe automorphism group for one more class of toric varieties. This is the class of almost rigid toric varieties.
A variety $X$ is called {\it almost rigid}, if it is not rigid and the subgroup $\SAut(X)$ of special automorphism is commutative. In this paper we prove a criterium for a toric variety to be almost rigid, see Theorem~\ref{pz}. Also we describe the automorphism group of an almost rigid toric variety as a semidirect product of four subgroups, see Theorem~\ref{apz}. 

Authors are grateful to Ivan Arzhantsev for useful discussions.  

\section{Locally nilpotent derivations}

Algebraic $\mathbb{G}_a$-subgroups of $\Aut(X)$ are in correspondence with locally nilpotent derivations (LND) of the algebra of regular functions~$\KK[X]$. Recall that a linear mapping $\delta\colon\KK[X]\rightarrow\KK[X]$ is called a {\it derivation} of $\KK[X]$, if it satisfies the Leibnits rule  $\delta(fg)=f\delta(g)+\delta(f)g$. A derivation~$\delta$ is called {\it locally nilpotent}, if for each $f\in\KK[X]$ there exists a positive integer number $n$ such, that $\delta^n(f)=0$. More information about LNDs one can find for example in the book \cite{Fr}. If we fix an LND~$\delta$ we can consider the following operator on $\KK[X]$, which we call the {\it exponent} of $\delta$:
$$
\exp(\delta)(f)=f+\delta(f)+\frac{\delta^2(f)}{2!}+\frac{\delta^3(f)}{3!}+\ldots
$$
Since $\delta$ is locally nilpotent, this sum is finite. The operator $\exp(\delta)$ gives an automorphism of $\KK[X]$ and hence an automorphism of the variety $X$. If a function $f$  belonged to the kernel of an LND $\delta$, then the mapping~$f\delta$ is also an LND. It is called a {\it replica} of the derivation~$\delta$. Each LND~$\delta$ corresponds to a $\mathbb{G}_a$-subgroup 
$$\mathcal{H}_\delta=\left\{\exp(t\delta)\mid t\in\KK\right\}\subset\Aut(\KK[X]).$$
Moreover, each algebraic $\mathbb{G}_a$-subgroup corresponds to an LND.

Let us fix a grading of $\KK[X]$  by a commutative group $G$: 
$$\KK[X]=\bigoplus_{g\in G}\KK[X]_g.$$ 
Then a derivation $\delta$ of $\KK[X]$ is called $G$-homogeneous of degree $g_0$, if for every $g\in G$ and a homogeneous element $f\in \KK[X]_g$ we have $\delta(f)\in\KK[X]_{g+g_0}$. If $X$ is a toric variety, then the algebra $\KK[X]$ admits a natural grading  of the group of characters of the torus $\mathfrak{X}(T)$. In Section~\ref{lnd} we define $\mathfrak{X}(T)$-homogeneous locally nilpotent derivations, corresponding to so called {\it Demazure roots}. Further this derivations play a key role. If we fix a $\mathbb{Z}$-grading of $\KK[X]$, then every LND $\partial$ can be decomposed onto a sum of homogeneous derivations $\partial=\sum_{i=l}^k\partial_i$. Homogeneous components $\partial_l$ and $\partial_k$ in this sum with the minimal and the maximal degrees  are locally nilpotent, see~\cite[Section~3]{FZ}. If we have $\mathbb{Z}^n$-grading, then each LND can be decomposed onto a sum of homogeneous derivations. The convex hull of degrees of summands is a polyhedron. Derivations corresponding to its vertices are locally nilpotent. 

We need the following lemma. 

\begin{lemma}\label{pzh}
The group $\SAut(X)$ is commutative if and only if for all locally nilpotent derivations of the algebra $\KK[X]$ kernels coincide.
\end{lemma}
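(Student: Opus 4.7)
The plan is to pass from $\SAut(X)$ to the LNDs themselves. Since $\SAut(X)$ is generated by the $\GA$-subgroups $\mathcal{H}_\delta$ attached to LNDs on $\KK[X]$, it is commutative precisely when each pair $\mathcal{H}_{\delta_1},\mathcal{H}_{\delta_2}$ commutes, which is equivalent to $[\delta_1,\delta_2]=0$ (the higher brackets vanish automatically once the first does). Thus the lemma reduces to showing that every pair of LNDs commutes as derivations if and only if every pair has the same kernel.

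For the direction ``$\SAut(X)$ commutative implies common kernel'' I would argue by contraposition. Suppose $f\in\ker\delta_1$ with $\delta_2(f)\neq 0$. The paper recalls that $f\delta_1$ is then an LND (a replica of $\delta_1$), and the Leibniz rule gives
\[
[\delta_2,f\delta_1]=\delta_2(f)\,\delta_1+f\,[\delta_2,\delta_1].
\]
If $\SAut(X)$ were commutative we would have both $[\delta_2,\delta_1]=0$ and $[\delta_2,f\delta_1]=0$, forcing $\delta_2(f)\,\delta_1=0$, which is absurd since neither factor vanishes.

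For the converse, assume every LND has the same kernel $A$ and let $\delta_1,\delta_2$ be two of them. I would pass to the fraction field $L=\frk(\KK[X])$. The extensions $\tilde\delta_i$ still have kernels $\frk(A)$, and a local slice $s$ of $\delta_1$ (so that $\delta_1(s)=a\in A\setminus\{0\}$) realizes $L$ as the purely transcendental extension $\frk(A)(s)$. Any $\frk(A)$-derivation of $L$ takes the form $P(s)\partial_s$, and one checks that $P(s)\partial_s$ acts locally nilpotently on $s$ only when $\deg_s P=0$, i.e.\ $P\in\frk(A)$, since otherwise the $s$-degrees of the iterates $(P\partial_s)^n(s)$ never drop to $-\infty$. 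Combined with a slice-type inclusion $\KK[X][1/a]\subseteq\frk(A)[s]$, this gives $\tilde\delta_2=c\,\tilde\delta_1$ for some $c\in\frk(A)=\ker\tilde\delta_1$, whence
\[
[\tilde\delta_1,\tilde\delta_2]=\tilde\delta_1(c)\,\tilde\delta_1=0,
\]
which restricts to $[\delta_1,\delta_2]=0$ on $\KK[X]$.

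The main obstacle will be the reverse direction, namely the structural claim that two LNDs with a common kernel $A$ are $\frk(A)$-proportional in the fraction field. This needs both the slice-theorem identification $\KK[X][1/a]\subseteq\frk(A)[s]$ and the local-nilpotency argument pinning the $\partial_s$-coefficient into $\frk(A)$; once that is in hand the commutator calculation is immediate.
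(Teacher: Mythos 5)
Your proof is correct, and its overall architecture matches the paper's: both directions reduce commutativity of $\SAut(X)$ to pairwise commutation of LNDs, and the ``commutative $\Rightarrow$ common kernel'' direction is essentially identical to the paper's argument --- the paper likewise takes $f\in\Ker\delta_1\setminus\Ker\delta_2$, passes to the replica $f\delta_1$, and extracts the term $\delta_2(f)\,\delta_1$ from the Leibniz rule to contradict commutativity. The only genuine divergence is in the converse. There the paper simply invokes Freudenburg's Principle~12, which asserts that two LNDs $\delta_1,\delta_2$ of a domain with a common kernel $A$ satisfy $f\delta_1=g\delta_2$ for suitable nonzero $f,g\in A$, and then concludes $[\delta_1,\delta_2]=0$. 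You instead reprove that principle from scratch: via the slice theorem $\KK[X][1/a]=A[1/a][s]$ you identify $\tilde\delta_1=a\,\partial_s$ on $\frk(A)(s)$, write $\tilde\delta_2=Q\,\partial_s$ with $Q=\delta_2(s)\in\frk(A)[s]$, and use the degree count on iterates $(Q\partial_s)^n(s)$ (valid in characteristic zero, where the leading coefficients cannot vanish) to force $\deg_s Q=0$, hence $\frk(A)$-proportionality and vanishing of the commutator. This buys self-containedness at the cost of length; the paper's citation buys brevity. Two cosmetic caveats: you should note that $\Ker\tilde\delta_2=\frk(A)$ (the kernel of the extension to the fraction field is the fraction field of the kernel) before writing $\tilde\delta_2$ as an $\frk(A)$-derivation, and, as in the paper, the nonzero-LND convention is tacitly needed so that $\delta_1\neq0$ in the first direction and a local slice exists in the second.
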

\begin{proof}
Suppose, there are two LNDs $\delta_1$ and $\delta_2$ of the algebra $\KK[X]$ such that their kernels do not coincide. We can assume, that there exists $f\in\Ker(\delta_1)\setminus\Ker(\delta_2)$. Let us prove that there are two noncommuting LNDs. If $\delta_1\circ\delta_2\neq\delta_2\circ\delta_1$, we done. Otherwise let us consider following two LNDs $f\delta_1$ and $\delta_2$. 
We have
$$(\delta_2\circ f\delta_1)(g)=\delta_2(f)\delta_1(g)+f(\delta_2\circ\delta_1)(g)=\delta_2(f)\delta_1(g)+(f\delta_1\circ\delta_2)(g).$$
If we have such $g$, that $\delta_1(g)\neq 0$, then $(\delta_2\circ f\delta_1)(g)\neq (f\delta_1\circ\delta_2)(g)$. So we obtain two noncommuting LNDs. It is easy to see, that groups $\mathcal{H}_\delta$ corresponding to these LNDs are not commute. Therefore, $\SAut(X)$ is not commutative.

Now let all kernels of LNDs coincide. Let us consider two LNDs $\delta_1$ and $\delta_2$. By \cite[Principle~12]{Fr} there exist such elements $f,g\in \Ker(\delta_1)=\Ker(\delta_2)$, that $f\delta_1=g\delta_2$.  Therefore $\delta_1$ and $\delta_2$ commute. Hence, $\mathcal{H}_{\delta_1}$ and $\mathcal{H}_{\delta_2}$ commute. So, $\SAut(X)$ is generated by a set of pairwise commuting commutative subgroups. Therefore, $\SAut(X)$ is commutative.
\end{proof}

\section{Toric varieties}

In this section we give basic facts about toric varieties. More information about toric varieties one can find in books~\cite{CLSch} and~\cite{Ful}. An irreducible algebraic variety is called {\it toric}, if an algebraic torus $T=(\mathbb{K}^\times)^n$ algebraically acts on it with an open orbit. We can assume the action of $T$ on $X$ to be effective. Note that we do not assume toric variety to be normal. Let $X$ be affine. An affine toric variety $X$ corresponds to a finitely generated monoid $P$ of weights of $T$-semiinvariant regular functions. Let us identify the group of characters $\mathfrak{X}(T)$ with a free abelian group $M=\mathbb{Z}^n$. A vector $m\in\mathbb{Z}^n$ with integer coordinates corresponds to the character~$\chi^m$. Since the open orbit on $X$ is isomorphic to $T$, we have an embedding of algebras of regular functions $\mathbb{K}[X]\hookrightarrow\mathbb{K}[T]$. Identifying the algebra $\mathbb{K}[X]$ with its image we obtain the following subalgebra graded by $P$
$$
\mathbb{K}[X]=\bigoplus_{m\in P}\mathbb{K}\chi^m\subset \bigoplus_{m\in M}\mathbb{K}\chi^m=\mathbb{K}[T].
$$
Let us consider the vector space $M_{\mathbb{Q}}=M\otimes_{\mathbb{Z}}\mathbb{Q}$ over the field of rational numbers. The monoid $P$ generates the cone $\sigma^{\vee}=\mathbb{Q}_{\geq0}P\subset M_{\mathbb{Q}}$. Since $P$ is finitely generated, the cone $\sigma^\vee$ is a finitely generated polyhedral cone. Since the action of~$T$ on $X$ is effective, the cone $\sigma^\vee$ does not belong to any proper subspace of $M_\QQ$. The variety $X$ is normal if and only if the monoid $P$ is {\it saturated}, i.e. $P=M\cap \sigma^{\vee}$. If $P$ is saturated, then the monoid $P_{sat}=M\cap \sigma^{\vee}$ we call the {\it saturation}  of the monoid $P$. Elements of $P_{sat}\setminus P$ we call {\it holes} of $P$. Let us give some definition according to \cite{TY}.

\begin{definition}
An element $p$ of the monoid $P$ is called {\it saturation point} of $P$, if the moved cone $p+\sigma^{\vee}$ has no holes, i.e. $(p+\sigma^{\vee})\cap M\subset P$. 
 
A face $\tau$ of the cone $\sigma^{\vee}$ is called {\it almost  saturated}, if there is a saturation point of  $P$ in $\tau$. 
Otherwise $\tau$ is called a {\it nowhere saturated} face.
\end{definition}

The following lemma is known, but for the reader's  convenience we give a proof.
\begin{lemma}\label{mg}
The maximal face, i.e. the whole cone $\sigma^\vee$, is almost saturated.
\end{lemma}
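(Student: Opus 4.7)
The plan is to produce a saturation point via the conductor ideal of $\KK[P]$ in $\KK[P_{sat}]$.

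First I would rephrase the problem. For any $p\in P\subseteq\sigma^\vee$, each $m\in(p+\sigma^\vee)\cap M$ satisfies $m-p\in M\cap\sigma^\vee=P_{sat}$, so $(p+\sigma^\vee)\cap M=p+P_{sat}$. The goal thus reduces to finding $p\in P$ with $p+P_{sat}\subseteq P$, equivalently, a monomial $\chi^p$ lying in the conductor ideal $\mathfrak{c}:=\{f\in\KK[P]\,:\,f\cdot\KK[P_{sat}]\subseteq\KK[P]\}$.

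Next I would show that $\KK[P_{sat}]$ is a finite $\KK[P]$-module. By Gordan's lemma, $P_{sat}$ has finitely many monoid generators $q_1,\ldots,q_r$. Each $q_i\in\sigma^\vee=\QQ_{\geq 0}P$, so some positive multiple $N_iq_i$ lies in $P$; hence $\chi^{q_i}$ satisfies the integral equation $X^{N_i}-\chi^{N_iq_i}=0$ over $\KK[P]$. Since $\KK[P_{sat}]$ is generated as a $\KK[P]$-algebra by the finitely many integral elements $\chi^{q_i}$, it is a finite $\KK[P]$-module.

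The quotient module $V:=\KK[P_{sat}]/\KK[P]$ is therefore finitely generated. Moreover, effectivity of the $T$-action (with open orbit $\cong T$) forces $P$ to generate $M$ as a group, so $\KK[P]$ and $\KK[P_{sat}]$ share the fraction field $\KK(M)$ and $V$ is a torsion $\KK[P]$-module. A finitely generated torsion module over the Noetherian domain $\KK[P]$ has nonzero annihilator (kill each of finitely many generators by a nonzero element and multiply), so $\mathfrak{c}\neq 0$.

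Finally, $\mathfrak{c}$ is homogeneous with respect to the natural $M$-grading of $\KK[P]$: indeed $V$ is $M$-graded, and the annihilator of a graded module is a graded ideal. Since each graded piece of $\KK[P]$ is either zero or one-dimensional (spanned by a monomial $\chi^m$, $m\in P$), any nonzero homogeneous element of $\mathfrak{c}$ is proportional to some $\chi^p$ with $p\in P$. The relation $\chi^p\cdot\KK[P_{sat}]\subseteq\KK[P]$ translates to $p+P_{sat}\subseteq P$, so $p$ is a saturation point lying in $\sigma^\vee$, which is the desired conclusion. The main technical input is the finiteness of $\KK[P_{sat}]$ over $\KK[P]$; once that is in place, extracting a monomial from the graded conductor is essentially formal.
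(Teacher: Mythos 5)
Your proof is correct, but it takes a genuinely different route from the paper. You argue via commutative algebra: $\KK[P_{sat}]$ is a finite module over $\KK[P]$ (Gordan's lemma plus integrality of the monomials $\chi^{q_i}$, since a positive multiple of each $q_i\in\QQ_{\geq 0}P$ lands in $P$), hence the conductor --- the annihilator of the finitely generated torsion module $\KK[P_{sat}]/\KK[P]$ --- is a nonzero ideal, and being the annihilator of an $M$-graded module it is $M$-graded, so it contains a monomial $\chi^p$; that $p$ is a saturation point. The paper instead gives a direct combinatorial construction: it lists the finitely many lattice points $b_1,\dots,b_l$ in the fundamental parallelepiped $\{\sum\lambda_ia_i\mid 0\leq\lambda_i<1\}$ spanned by generators $a_1,\dots,a_r$ of $P$, writes each as $b_j=c_j-d_j$ with $c_j,d_j\in P$ (using, as you do, that $P$ generates $M$), and verifies by an integer-part/fractional-part decomposition that $d=d_1+\cdots+d_l$ is a saturation point. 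The paper's argument is more elementary and explicitly exhibits the saturation point; yours is shorter modulo standard facts, makes the role of the conductor ideal transparent, and generalizes to any graded finite birational extension. All the steps you invoke (finiteness of the extension, torsionness from the common fraction field $\KK(M)$, gradedness of the annihilator, one-dimensionality of the graded pieces) are sound, so there is no gap.
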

\begin{proof}
Let $a_1, \ldots, a_r$ be a system of generators of a monoid $P$. Since the set
$$S=\left\{\lambda_1 a_1+\ldots+\lambda_r a_r\mid \lambda_i\in\QQ, 0\leq \lambda_i<1 \right\}$$
is bounded, it contains only finite number of elements of $M$. Denote 
$
S\cap M=\{b_1,\ldots, b_l\}.
$
Since the action of $T$ on $X$ is effective, the group generated by $P$ coincides with $M$. Hence, for every~$j$ there exist $c_j$ and $d_j$ in $P$ such, that $b_j=c_j-d_j$. Let us consider $d=d_1+\ldots+d_l$. 
Let us prove, that 
$$(d+\sigma^\vee)\cap M\subset P.$$ 
Suppose $d+m\in(d+\sigma^\vee)\cap M$. Since vectors $a_1,\ldots, a_r$ generate $\sigma^\vee$, the element~$m$ is a linear combination $$m=\mu_1a_1+\ldots+\mu_r a_r,\ \mu_i\in\QQ,\ \mu_i\geq 0.$$
We denote by $[\ \cdot\ ]$ the integer part of a number, and by $\{\,\cdot\,\}$ the fractional part of a number. We obtain
$$
m=\sum_{i=1}^r \left[\mu_i\right]a_i+\sum_{i=1}^r \{\mu_i\}a_i,
$$
The first summand is in $M$ and $m\in M$. Therefore, the second summand is in~$M$. Since $\{\mu_i\}<1$, we obtain
$\sum_{i=1}^r \{\mu_i\}a_i\in S$. Hence, there is such~$k$ that $\sum_{i=1}^r\{\mu_i\}a_i=b_k$. We obtain
$$
d+m=\sum_{j=1}^ld_j+\sum_{i=1}^r \left[\mu_i\right]a_i+b_k=\sum_{j\neq k}d_j+\sum_{i=1}^r \left[\mu_i\right]a_i+c_k\in P.
$$
\end{proof}

The lattice of one-parameter subgroups of the torus $T$ we denote by $N$. The lattice $N$ is a dual lattice to $M$. There is a natural pairing
$M\times N\rightarrow \mathbb{Z}$, which we denote $\langle \cdot,\cdot\rangle$. This pairing can be extended to a pairing between vector spaces $N_\QQ=N\otimes_\ZZ\QQ$ and $M_\QQ$. In the space $N_\QQ$ we define the cone $\sigma$ dual to $\sigma^\vee$, by the rule
$$
\sigma=\{v\in N_\QQ\mid\forall w\in\sigma^\vee : \langle w,v\rangle\geq 0\}.
$$
The finitely generated polyhedral cone $\sigma$ is {\it pointed}, i.e. it does not contain any nontrivial subspaces.

There is a bijection between $k$-dimensional faces of $\sigma$ and $(n-k)$-dimensional faces of~$\sigma^\vee$. A face $\tau\preccurlyeq\sigma$ corresponds to the face  $\widehat{\tau}=\tau^{\bot}\cap\sigma^\vee\preccurlyeq \sigma^\vee$. Also there is a bijection between $(n-k)$-dimensional faces of $\sigma^\vee$ and $k$-dimensional $T$-orbits on $X$. A face $\widehat{\tau}\preccurlyeq \sigma^\vee$ corresponds to the orbit, which is open in the set of zeros of the ideal 
$$I_{\widehat{\tau}}=\bigoplus_{m\in P\setminus\widehat{\tau}}\KK\chi^m.$$
The composition of these bijections gives a bijection between $k$-demensional faces of the cone $\sigma$ and $k$-dimensional $T$-orbits. The orbit corresponding to a face $\tau$, we denote by~$O_\tau$.

If $X$ is normal, then the singular locus $X^{sing}$ has codimension not less than~$2$. Hence, the algebras of regular functions on $X$ and on the regular locus $X^{reg}$ coincide. In the case of nonnormal variety this algebras can be different. Despite the variety $X^{reg}$ is not affine, it is convenient to come from $X$ to $X^{reg}$, since the algebra $\KK[X^{reg}]$ is easier than $\KK[X]$.

\begin{lemma}\label{co}
Let $X$ be a toric variety and $\sigma$ be the corresponding cone. Let us remove such extremal rays of  $\sigma$, that correspond to orbits consisting of singular points. Consider the cone $\gamma$, generated by other extremal rays. Then the algebra of regular functions on quasiaffine variety $X^{reg}$ has the following view
$$\KK[X^{reg}]=\bigoplus_{m\in M\cap \gamma^\vee}\KK\chi^m.$$
\end{lemma}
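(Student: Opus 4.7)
My plan is to identify $\KK[X^{reg}]$ by passing to the normalization $\widetilde X$ and using the standard affine-cover description of normal toric varieties. First, since $X^{reg}$ is $T$-stable and contains the open orbit $T$, the inclusion $\KK[X^{reg}] \subseteq \KK[T] = \bigoplus_{m \in M} \KK\chi^m$ is a $T$-submodule; every $T$-eigenspace of $\KK[T]$ is one-dimensional, so $\KK[X^{reg}] = \bigoplus_{m \in A}\KK\chi^m$ for some subset $A \subseteq M$, and the task reduces to showing $A = M \cap \gamma^\vee$.

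Let $\pi\colon \widetilde X \to X$ denote the normalization. Since $X^{reg}$ is smooth, hence normal, $\pi$ is an isomorphism over $X^{reg}$, so we may view $X^{reg}$ as an open subset of $\widetilde X$. Put $\mathcal{F} = \{\tau \preccurlyeq \sigma : O_\tau \subseteq X^{reg}\}$; because $X^{sing}$ is closed, $O_\tau \subseteq X^{sing}$ forces $O_{\tau'} \subseteq X^{sing}$ for every $\tau' \succcurlyeq \tau$, so $\mathcal{F}$ is downward-closed in the face poset of $\sigma$. In particular, every extremal ray of a face $\tau \in \mathcal{F}$ lies in $\mathcal{F}$ and is therefore one of the generating rays of $\gamma$, so $\tau \subseteq \gamma$ for every $\tau \in \mathcal{F}$. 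In $\widetilde X$ the affine open $U_\tau = \spec\bigl(\bigoplus_{m \in M \cap \tau^\vee}\KK\chi^m\bigr)$ decomposes as $U_\tau = \bigsqcup_{\tau' \preccurlyeq \tau} O_{\tau'}$, so downward-closedness yields $U_\tau \subseteq X^{reg}$ for every $\tau \in \mathcal{F}$; every point of $X^{reg}$ lies in some such $U_\tau$, hence $X^{reg} = \bigcup_{\tau \in \mathcal{F}} U_\tau$. By the sheaf property,
$$
\KK[X^{reg}] \;=\; \bigcap_{\tau \in \mathcal{F}} \KK[U_\tau] \;=\; \bigoplus_{m \in M \cap \bigcap_{\tau \in \mathcal{F}}\tau^\vee} \KK\chi^m.
$$

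Finally, the equality $\bigcap_{\tau \in \mathcal{F}}\tau^\vee = \gamma^\vee$ completes the proof: the inclusion $\supseteq$ is immediate from $\tau \subseteq \gamma$ for each $\tau \in \mathcal{F}$, while restricting the intersection to the rays of $\gamma$ (all of which lie in $\mathcal{F}$) already produces $\gamma^\vee$. The step that most requires care is the identification $X^{reg} \hookrightarrow \widetilde X$ together with the compatibility of orbit decompositions under $\pi$; both follow from the fact that normalization is an isomorphism over the normal locus, combined with the standard orbit--face correspondence on a toric variety associated with $\sigma$.
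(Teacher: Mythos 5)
Your proof is correct, but it follows a genuinely different route from the paper's. The paper's argument stays inside $\KK[T]$ and works divisorially: after observing (as you do) that $\KK[X^{reg}]$ splits into one-dimensional weight spaces and that normality of $X^{reg}$ forces the weight set to be the lattice points of a cone, it invokes the order-of-vanishing formula $\mathrm{ord}_{\overline{O_\rho}}(\chi^m)=\langle m,n_\rho\rangle$ together with the fact that on a normal variety a rational function is regular iff it has no poles along prime divisors; the conditions $\langle m,n_\rho\rangle\geq 0$ over the rays $\rho$ whose orbits are smooth then cut out exactly $\gamma^\vee$. You instead realize $X^{reg}$ as a $T$-stable open subset of the normalization $\widetilde X$, use downward-closedness of the set $\mathcal{F}$ of faces with smooth orbits to cover it by the standard affine charts $U_\tau$, $\tau\in\mathcal{F}$, and compute $\KK[X^{reg}]=\bigcap_{\tau\in\mathcal{F}}\KK[U_\tau]$ by the sheaf axiom, reducing the lemma to the elementary cone identity $\bigcap_{\tau\in\mathcal{F}}\tau^\vee=\gamma^\vee$. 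Your approach trades the valuation-theoretic input for the orbit--cone correspondence and the affine cover of a normal toric variety; it makes explicit the identification of $X^{reg}$ with an open piece of $\widetilde X$ (which the paper leaves implicit) and avoids having to justify separately that the weight monoid of $\KK[X^{reg}]$ is saturated, at the cost of a slightly longer setup. The only points that deserve the care you already flag are the compatibility of the orbit decompositions of $X$ and $\widetilde X$ under the normalization map and the fact that orbits are uniformly smooth or singular; both are standard, so the argument stands.
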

\begin{proof}
Since the variety $X^{reg}$ is $T$-invariant, the algebra $\KK[X^{reg}]$ can be decomposed onto the direct sum of weight components. Therefore, we need to understand, which functions $\chi^m$ are regular on $X^{reg}$. Since $X^{reg}$ is a normal variety, the algebra $\KK[X^{reg}]$ is a direct sum of components corresponding to all integer points in some cone. Each function $\chi^m, m\in M$ is regular on the open orbit. It is known that the order of zero or pole of $\chi^m$ on the divisor $\overline{O_\rho}$ equals $\langle m,n_\rho\rangle$. We obtain the condition $\langle m,n_\rho\rangle\geq 0$ for all $\rho$ such, that $O_\rho$ consists of regular points. These conditions give the cone $\gamma^\vee$. 
\end{proof}

Sometimes the monoid $P$ has some symmetries. Let us consider the following discrete subgroup of the automorphism group of a toric variety. 
Let $\varphi\colon M_\QQ\rightarrow M_\QQ$ be a linear operator such that $\varphi\in\mathrm{GL}_n(\ZZ)$, i.e. $\varphi(M)=M$. Suppose that $\varphi(P)=P$. The operator $\varphi$ induces the automorphism $\alpha(\varphi)$ of the algebra $\KK[X]$ by the following formula
$
\alpha(\varphi)(\chi^m)=\chi^{\varphi(m)}.
$
It is easy to see, that $\alpha(\varphi\circ\psi)=\alpha(\varphi)\circ\alpha(\psi)$.  Therefore, $\alpha$ is an injection from the stabilizer 
$St_{\mathrm{GL}_n(\ZZ)}(P)$ to $\Aut(X)$.
\begin{definition}\label{ops}
The image of the injection $\alpha$ we call {\it the symmetry group of the monoid $P$}. We denote it by $S(X)$. 
\end{definition}

\begin{re}\label{zam}
Assume that the cone $\sigma^\vee$ does not contain any lines. This is equivalent to the fact that the cone $\sigma$ is not contained in any hyperspace, and to the fact that $X$ does not admit any nonconstant invertible functions. 
Then the group $S(X)$  is finite. Indeed the operator $\varphi$ permute primitive vectors on extremal rays of the cone $\sigma^{\vee}$. Also the group $S(X)$ is finite in the case, when $\sigma^{\vee}$ is a line. In this case $S(X)$ is isomorphic to a subgroup in $\mathrm{GL}_1(\ZZ)\cong \mathbb{Z}_2$. 

In all other cases $S(X)$ is an infinite discrete group. Indeed there are two variants. The first case, $\sigma^\vee$ is a subspace $M_\QQ$ of dimension $n\geq 2$. In this case it is easy to see that $P=M$ and $S(X)=\mathrm{GL}_n(\ZZ)$. The second case is  that $\sigma^\vee$ contains a nontrivial subspace, but it is not a space. Denote by $W$ the maximal subspace in $\sigma^\vee$. Then $L=W\cap M$ is a nonzero sublattice of $M$. Let us fix a vector $v\in N$ belonging to a relative interior of $\sigma$. Then for every element $l\in L$ we define an operator $\varphi_l\colon M_{\QQ}\rightarrow M_{\QQ}$ by the formula $\varphi_l(m)=m+\langle m,v\rangle l$. We have $\varphi_l(P)=P$. Automorphisms $\alpha(\varphi_l)$ for all $l\in L$ form a subgroup in $\Aut(X)$, isomorphic to $L$. 
\end{re}

\section{LND on toric varieties}\label{lnd}

In the paper \cite{D} all $M$-homogeneous LNDs  on normal toric varieties are described. Consider an extremal ray $\rho$ of $\sigma$. Let us denote by $n_\rho$ the primitive integer vector on $\rho$. The element $e\in M$ is called a {\it Demazure root, corresponding to $\rho$}, if $\langle e, n_\rho \rangle=-1$ and for each other extremal ray $\xi$ of the cone $\sigma$ it is true $\langle e,n_{\xi}\rangle\geq 0$.
The set of all Demazure roots corresponding to an extremal ray~$\rho$ we denote ${\mathscr R}_\rho$. It is easy to see, that for each extremal ray $\rho$ the set
${\mathscr R}_\rho$ is nonempty. Moreover it is infinite.
Each Demazure root $e\in{\mathscr R}_\rho$ corresponds to the LND $\partial_e$ of the algebra $A=\bigoplus_{m\in P_{sat}}\KK\chi^m$ defined on homogeneous elements by the formula
$$
\partial_e(\chi^m)=\langle m,n_\rho\rangle \chi^{m+e}.
$$
The derivation $\partial_e$ is $M$-homogeneous of degree $e$. The kernel of the derivation $\partial_e$ is the subalgebra $\bigoplus_{m\in M\cap \widehat{\rho}}\KK\chi^m$.
If the subalgebra $\KK[X]\subset A$ is $\partial_e$-invariant, then $\partial_e$ induces an LND of the algebra $\KK[X]$, which we denote $\delta_e$. It is easy to see, that the subalgebra $\KK[X]$ is $\partial_e$-invariant if and only if  $(P+e)\cap P_{sat}\subset P$. 
\begin{ex}
Let us consider the variety corresponding to the monoid
$$P=\{(a,b)\in\ZZ^2\mid a\geq0, b\geq 0, (a,b)\neq (1,0)\}.$$
We have $\sigma^\vee=\mathrm{cone}((1,0),(0,1))$, $\sigma=\mathrm{cone}((1,0),(0,1))$. Let us denote $\rho=\QQ_{\geq 0}(0,1)$. Then 
${\mathscr R}_\rho=\{e_k=(k,-1)\mid k\in\ZZ_{\geq 0}\}.$
The derivation $\delta_{e_k}$ is well defined if $k\geq 2$.

$$
\begin{picture}(100,80)
\put(70,75){$\sigma^\vee$}
\put(0,20){\vector(1,0){80}}
\put(0,20){\vector(0,1){65}}
\put(0,20){\circle*{4}}
\put(0,40){\circle*{4}}
\put(0,60){\circle*{4}}
\put(20,20){\circle{4}}
\put(20,40){\circle*{4}}
\put(20,60){\circle*{4}}
\put(40,20){\circle*{4}}
\put(40,40){\circle*{4}}
\put(40,60){\circle*{4}}
\put(60,20){\circle*{4}}
\put(60,40){\circle*{4}}
\put(60,60){\circle*{4}}
\put(0,20){\vector(0,-1){20}}
\put(2,0){$e_1$}
\put(0,20){\vector(1,-1){20}}
\put(22,0){$e_2$}
\put(0,20){\vector(2,-1){40}}
\put(42,0){$e_3$}
\put(0,20){\vector(3,-1){60}}
\put(62,0){$e_4$}
\end{picture}
$$

\end{ex}

The following lemma is a key lemma. It gives a criterium of existing a well defined LND $\delta\in{\mathscr R}_\rho$ for a given $\rho$. The answer is given in terms of smoothness of orbits of codimension 1 and in terms of the monoid~$P$.  

\begin{lemma}\label{eu}
Let $X$ be a toric variety, $\sigma$ be the corresponding cone and $\rho$ be an extremal ray of~$\sigma$. Denote by $O_{\rho}$ the corresponding orbit. Then the following conditions are equivalent:

1) the face $\widehat{\rho}$ of the cone $\sigma^{\vee}$ is almost saturated;

2) there exist a Demazure root $e\in {\mathscr R}_\rho$ of the cone $\sigma$ such, that the corresponding derivation $\delta_e$ of the algebra $\mathbb{K}[X]$ is well defined;

3) the orbit $O_{\rho}$ consists of smooth points.
\end{lemma}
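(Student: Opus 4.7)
The plan is to split the equivalence into $(1)\Leftrightarrow(2)$, which is a direct combinatorial manipulation of the condition $(P+e)\cap P_{sat}\subseteq P$ characterising well-definedness of $\delta_e$, and $(1)\Leftrightarrow(3)$, which passes through the affine open $V:=D(\chi^q)$ for a well-chosen $q\in P\cap\widehat{\rho}$. In both directions the controlling remark is that $\delta_e$ descends to $\KK[X]$ if and only if $m+e\in P$ whenever $m\in P$ satisfies $\langle m,n_\rho\rangle\geq 1$.

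For $(1)\Rightarrow(2)$ I start with a saturation point $p\in P\cap\widehat{\rho}$, fix any $e_0\in\mathscr{R}_\rho$, and set $e:=e_0+p$; then $e\in\mathscr{R}_\rho$ because $\langle p,n_\rho\rangle=0$ and $\langle p,n_\xi\rangle\geq 0$ for $\xi\neq\rho$, and for $m\in P$ with $\langle m,n_\rho\rangle\geq 1$ one has $m+e_0\in P_{sat}$, so $m+e=p+(m+e_0)\in p+P_{sat}\subseteq P$. For $(2)\Rightarrow(1)$ I invoke Lemma~\ref{mg} to get a saturation point $d\in P$ of the full cone, set $k:=\langle d,n_\rho\rangle$ and $p:=d+ke$. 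Iterating the shift-by-$e$ rule from $d$ stays inside $P$ at every stage (the $n_\rho$-pairing stays $\geq 1$ until the last step), so $p\in P\cap\widehat{\rho}$; the same iteration applied to $d+q$ for arbitrary $q\in P_{sat}$ (which is in $P$ by the saturating property of $d$) produces $p+q\in P$ after $k$ steps, making $p$ a saturation point in $\widehat{\rho}$.

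For $(1)\Rightarrow(3)$ I may assume, after replacing $p$ by $p+q'$ for a lattice point $q'\in\relint(\widehat{\rho})$, that $p$ itself lies in the relative interior of $\widehat{\rho}$. Then $V:=D(\chi^p)$ is a $T$-stable affine neighborhood of $O_\rho$ meeting only the open orbit and $O_\rho$, since the only faces of $\sigma^\vee$ containing the facet $\widehat{\rho}$ are $\widehat{\rho}$ and $\sigma^\vee$. The saturation condition combined with $\langle p,n_\xi\rangle>0$ for $\xi\neq\rho$ shows that for each $m\in M\cap\rho^\vee$ one has $m+jp\in P_{sat}$ for $j\gg 0$, whence $m+(j+1)p\in P$; this yields $\KK[V]=\KK[M\cap\rho^\vee]$, which is the coordinate algebra of $\mathbb{A}^1\times T^{n-1}$. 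Thus $V$, and hence $O_\rho\subset V$, is smooth.

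The hard direction is $(3)\Rightarrow(1)$. Choose $q\in P\cap\relint(\widehat{\rho})$ (this set is nonempty because the set of holes of $P$ is finite, by applying Lemma~\ref{mg} after translating its saturation point into $\mathrm{int}(\sigma^\vee)$). Smoothness of $O_\rho$ together with smoothness of the open orbit makes $V=D(\chi^q)$ smooth, hence normal, so $\KK[X][\chi^{-q}]=\KK[M\cap\rho^\vee]=\KK[X^{sat}][\chi^{-q}]$. Now $\KK[X^{sat}]$ is a finite $\KK[X]$-module with homogeneous generators $\chi^{h_1},\ldots,\chi^{h_s}$, and each $\chi^{h_i}\in\KK[X][\chi^{-q}]$ supplies some $k_i\geq 0$ with $h_i+k_iq\in P$. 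I claim $K:=\max_i k_i$ makes $Kq$ a saturation point in $\widehat{\rho}$: for $m\in P_{sat}$, decomposing $\chi^m=\sum a_i\chi^{h_i}$ with $a_i\in\KK[X]$ and comparing $M$-weights forces some nonzero $a_i$ to be a scalar multiple of $\chi^{m-h_i}$, so $m-h_i\in P$ for that $i$; then
\[
m+Kq=(m-h_i)+(h_i+k_iq)+(K-k_i)q\in P.
\]
The main obstacle is extracting this \emph{uniform} $K$ from the a priori $m$-dependent statement $m+k(m)q\in P$, and the resolution is precisely the finite generation of $\KK[X^{sat}]$ over $\KK[X]$, which reduces the shifting problem to finitely many generator cases.
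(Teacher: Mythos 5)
Your argument is correct in substance but follows a genuinely different route from the paper's. The paper proves the cycle $1\Rightarrow 2\Rightarrow 3\Rightarrow 1$: its $1\Rightarrow 2$ perturbs a fixed root $r$ by a large multiple $kv$ of an interior vector of $\widehat{\rho}$; its $2\Rightarrow 3$ uses the $\GA$-action $\mathcal{H}_e$ to push a point of $O_\rho$ into the open orbit; and its $3\Rightarrow 1$ is a local-algebra computation at a point $x\in O_\rho$ (controlling $\dim \mathrm{I}_{\widehat{\rho}}/(\mathrm{I}_{\widehat{\rho}}^2+\mathfrak{m}_x\mathrm{I}_{\widehat{\rho}})$ via Nakayama's lemma) combined with a cited result of Takemura--Yoshida on Hilbert bases. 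You replace all three steps: the translation $e=e_0+p$ by the saturation point itself is cleaner than $r+kv$; your direct $2\Rightarrow 1$, iterating the shift-by-$e$ rule $k=\langle d,n_\rho\rangle$ times starting from the saturation point of Lemma~\ref{mg}, is an implication the paper never proves separately; your $1\Rightarrow 3$ exhibits the explicit smooth chart $D(\chi^p)$ with coordinate ring $\KK[M\cap\rho^\vee]\cong\KK[x_1,x_2^{\pm1},\ldots,x_n^{\pm1}]$ instead of the paper's dynamical argument; and, most significantly, your $3\Rightarrow 1$ avoids the reference to [TY] entirely by localizing at $\chi^q$, identifying $\KK[X][\chi^{-q}]$ with $\KK[X^{sat}][\chi^{-q}]$ via normality of the smooth chart, and using finite generation of the normalization module to extract the uniform shift $Kq$. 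Each step checks out, and the result is arguably more self-contained than the published proof.

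One justification, however, is false as stated, even though the fact it supports is true: you claim $P\cap\relint(\widehat{\rho})\neq\emptyset$ ``because the set of holes of $P$ is finite.'' The set of holes need not be finite --- the paper's own examples (e.g. $P=\ZZ_{\geq0}^2$ with all points $(2k+1,0)$ removed) have infinitely many holes, and handling such monoids is precisely the point of the almost-saturation condition. The correct argument is the standard one for faces of $\QQ_{\geq 0}P$: if a point of the face $\widehat{\rho}$ is written as a nonnegative combination of elements of $P$, all elements occurring with positive coefficient must lie in $\widehat{\rho}$, so $\QQ_{\geq 0}(P\cap\widehat{\rho})=\widehat{\rho}$; summing finitely many elements of $P\cap\widehat{\rho}$ that span $\widehat{\rho}$ then yields a point of $P$ in $\relint(\widehat{\rho})$. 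With this repair your proof stands.
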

\begin{proof}
$1\Rightarrow 2$ 
Let $P$ be the weight monoid of the variety $X$.
Since the face $\widehat{\rho}$ is almost saturated, there is an element $w\in P\cap\widehat{\rho}$ such, that 
$$
\left(w+\sigma^{\vee}\right)\cap M\subset P.
$$
Let $r\in{\mathscr R}_\rho$ be a Demazure root corresponding to the extremal ray $\rho$. Let us take an integer vector $v$ in the relative interior of the face $\widehat{\rho}$. Then for every $k\in\mathbb{N}$ and for every extremal ray $\xi\neq\rho$ of the cone $\sigma$ we have
$$\langle r+kv, n_{\rho}\rangle=\langle r, n_{\rho}\rangle+k\langle v, n_{\rho}\rangle=-1;$$
$$\langle r+kv, n_{\xi}\rangle=\langle r, n_{\xi}\rangle+k\langle v, n_{\xi}\rangle\geq 0.$$
Hence, $r+kv\in{\mathscr R}_\rho$  for each positive integer $k$. It is easy to see that for sufficiently large~$k$ we have  $(r+kv+\sigma^{\vee})\cap\sigma^\vee\subset w+\sigma^{\vee}$. Let us denote $e=r+kv$ for this $k$. Then
$$
(e+P)\cap P_{sat}\subset (e+\sigma^{\vee})\cap\sigma^{\vee}\subset w+\sigma^{\vee}.
$$
So $(e+P)\cap P_{sat}\subset (w+\sigma^{\vee})\cap M\subset P$.
Therefore, the formula $\delta_e(\chi^m)=\langle m,n_\rho \rangle \chi^{m+e}$
gives a well defined LND of $\mathbb{K}[X]$.

$2\Rightarrow 3$ The proof of this implication is analogical to the proof of Lemma~14 and Theorem~6 in \cite{Sh}. The derivation $\delta_e$ corresponds to the $\mathbb{G}_a$-action $\mathcal{H}_e$ on $X$. Since the ideal of functions vanishing on $O_\rho$ has the view
$$\mathrm{I}_{\widehat{\rho}}=\bigoplus_{m\in P\setminus \widehat{\rho}}\KK\chi^m\ \ \text{and}\ \ \mathrm{Ker}\,\delta_e=\bigoplus_{m\in P\cap \widehat{\rho}}\KK\chi^m,$$
there exists a function $f=\chi^m$ such, that $f\in \mathrm{I}_{\widehat{\rho}}$, but $\delta_e(f)\notin \mathrm{I}_{\widehat{\rho}}$. Let us fix such a point $x\in O_\rho$, that $\delta_e(f)(x)\neq 0$. We can assume that $x$ does not belong to closures of orbits $\overline{O_{\xi}}$ for all extremal rays $\xi\neq\rho$ of the cone $\sigma$. Then there exists $s\in\mathcal{H}_e$ such that $s\cdot x\notin \overline{O_\rho}$. The $\mathcal{H}_e$-orbit of $x$ is  irreducible, and it is not contained in any $ \overline{O_{\zeta}}$. Therefore there exists a point $y=s\cdot x$ such that $y$ does not belong to closures of the orbits $ \overline{O_{\zeta}}$ for all extremal rays $\zeta$ of the cone $\sigma$. Hence, $y$ belonged to the open $T$-orbit. Since the open $T$-orbit consists of smooth points, the point $y$ is smooth. Therefore, the point $x$ is smooth. That is the orbit $O_\rho$ consists of smooth points.

$3\Rightarrow 1$ Let us fix a point $x\in O_\rho$. The point $x$ is a smooth point of the closure~$\overline{O_\rho}$. The algebra of regular functions on the variety
$\overline{O_\rho}$ has the view $\KK[\overline{O_\rho}]=\bigoplus_{m\in P\cap\widehat{\rho}}\KK\chi^m$. Let us denote by $\mathfrak{m}_x$ the maximal ideal in the algebra $\KK[\overline{O_\rho}]$ corresponding to $x$. And we denote by  $\mathfrak{M}_x$ the maximal ideal in the algebra $\KK[X]$ corresponding to $x$. 
Then $\mathfrak{M}_x=\mathfrak{m}_x\oplus \mathrm{I}_{\widehat{\rho}}$. Hence, 
$$
\mathfrak{M}_x/\mathfrak{M}_x^2=\mathfrak{m}_x/\mathfrak{m}_x^2\oplus\mathrm{I}_{\widehat{\rho}}/\left(\mathrm{I}_{\widehat{\rho}}^2+\mathfrak{m}_x\mathrm{I}_{\widehat{\rho}}\right).
$$
Since $x$ is a smooth point of $X$, we have
$$\dim X=\dim \mathrm{T}_xX=\dim \mathfrak{M}_x/\mathfrak{M}_x^2.$$ From the other hand, since $x$ is a smooth point of $\overline{O_\rho}$, we have 
$$\dim \overline{O_\rho}=\dim \mathrm{T}_x\overline{O_\rho}=\dim \mathfrak{m}_x/\mathfrak{m}_x^2.$$ 
The codimension of the orbit $O_\rho$ equals 1. This implies  
$$
\dim\mathrm{I}_{\widehat{\rho}}/\left(\mathrm{I}_{\widehat{\rho}}^2+\mathfrak{m}_x\mathrm{I}_{\widehat{\rho}}\right)=1.
$$
Let us choose $\chi^w\in\mathrm{I}_{\widehat{\rho}}$ such, that $\mathrm{I}_{\widehat{\rho}}=\langle \chi^w\rangle\oplus(\mathrm{I}_{\widehat{\rho}}^2+\mathfrak{m}_x\mathrm{I}_{\widehat{\rho}})$.

Let us fix a system of generators $a_1,\ldots, a_r$  of the monoid $P$. Assume that $a_1,\ldots,a_l$ belong to the face $\widehat{\rho}$, and $a_{l+1},\ldots, a_r$ do not belong to this face. Suppose the lattice $L=\mathbb{Z}(a_1,\ldots,a_l)$ does not coincide with $M\cap \rho^{\bot}$. Let us consider the lattice $\Lambda=\ZZ(a_1,\ldots,a_l, w)$. Let us prove that $\mathrm{I}_{\widehat{\rho}}$ is contained in the subspace $W=\bigoplus_{\lambda\in\Lambda}\KK\chi^\lambda$. For each positive integer $k$ we denote 
$$V_k=\bigoplus_{m\in M, \langle m,n_\rho\rangle=k}\KK\chi^m.$$
Let us prove by induction that $\left(\mathrm{I}_{\widehat{\rho}}\cap V_k\right)\subset W$. 

Base of induction, let $k_0$ be the minimal number such that $V_{k_0}\cap\mathrm{I}_{\widehat{\rho}}\neq\{0\}$. Since $\mathrm{I}_{\widehat{\rho}}^2\cap V_{k_0}=\{0\}$, we obtain $\mathrm{I}_{\widehat{\rho}}\cap V_{k_0}\subset \langle \chi^w\rangle\oplus\mathfrak{m}_x\mathrm{I}_{\widehat{\rho}}$. If  $\mathrm{I}_{\widehat{\rho}}\cap V_{k_0}$ is not contained in $\bigoplus_{m\in w+L}\KK\chi^m$, then there exists an $\alpha\in M\setminus \Lambda$ such that $\langle \alpha,n_\rho\rangle=k_0$ and  
$$S=\bigoplus_{m\in \alpha+L}\KK\chi^m\neq\{0\}.$$ 
We have $S=\mathfrak{m}_x S$.
The ideal $\mathrm{I}_{\widehat{\rho}}$ is finitely generated. Hence, $S$ is a finitely generated $\KK[\overline{O_\rho}]$-module. 
By Nakayama's lemma there exists an element $u\in \mathfrak{m}_x$ such, that $u\cdot f=f$ for all $f\in S$. If $u\cdot \chi^m=\chi^m$, then $u=1$, but 
$1\notin \mathfrak{m}_x$.

Inductive step. Suppose $k'>k_0$ and for every $k<k'$ the statement is proved. Then
$$\mathrm{I}_{\widehat{\rho}}\cap V_{k'}\subset \mathrm{I}_{\widehat{\rho}}^2\oplus\mathfrak{m}_x\mathrm{I}_{\widehat{\rho}}.$$ But inductive assumption implies that $\mathrm{I}_{\widehat{\rho}}^2\cap V_{k'}\subset W$. If $\mathrm{I}_{\widehat{\rho}}\cap V_{k'}$ is not a subset of $W$, then there is an  $\alpha\in M\setminus L$ such, that $\langle \alpha,n_\rho\rangle=k'$ and
$$S_{k'}=\bigoplus_{m\in \alpha+L}\KK\chi^m\neq\{0\}.$$
From the other hand $S_{k'}\cap \mathrm{I}_{\widehat{\rho}}^2=\{0\}$, hence $S_{k'}\subset \mathfrak{m}_x\mathrm{I}_{\widehat{\rho}}$. Therefore, $S_{k'}= \mathfrak{m}_x S_{k'}$. Again we obtain a contradiction with Nakayama's lemma. 

Thus, we prove, that $\mathrm{I}_{\widehat{\rho}}\subset W$. Hence, $\KK[X]\subset W$. Since the action of $T$ on $X$ is effective, it implies that $\Lambda=M$.

Assume that the face $\widehat{\rho}$ is nowhere saturated. 
Theorem~3.3 from \cite{TY} asserts that, for some element $b$ of Hilbert basis of the monoid $P_{sat}$ there is no any decomposition of the view
$$
b=x_1a_1+\ldots+x_r a_r,\ \text{where} \ x_i\in \mathbb{Z},\ \text{and}\ \forall j>l: x_j\geq 0
$$
It follows from  $\Lambda=M$ that $L= M\cap \rho^{\bot}$. Therefore, $b$ does not belong to the face $\widehat{\rho}$. Let $\langle b,n_\rho\rangle=k$. Then $V_k\cap I_{\widehat{\rho}}=\{0\}$.  Hence, $k$ is not divisible by $k_0$. Therefore, $k_0\neq 1$. From the other hand since $T$ acts on $X$ effectively,  there is such $\widetilde{k}$ that $\widetilde{k}$ is not divisible by  $k_0$ and $V_{\widetilde{k}}\cap\mathrm{I}_{\widehat{\rho}}\neq \{0\}$. Let us assume that $\widetilde{k}$ is a minimal number satisfying this condition. Then $V_{\widetilde{k}}\cap\mathrm{I}_{\widehat{\rho}}^2=\{0\}$. Again Nakayama's lemma implies that the following inclusion is false $V_{\widetilde{k}}\cap\mathrm{I}_{\widehat{\rho}}\subset \mathfrak{m}_x\mathrm{I}_{\widehat{\rho}}$. Hence, $\dim\mathrm{I}_{\widehat{\rho}}/\left(\mathrm{I}_{\widehat{\rho}}^2+\mathfrak{m}_x\mathrm{I}_{\widehat{\rho}}\right)\geq 2$. We obtain a contradiction. 
\end{proof}

\section{Flexible toric varieries}

In this section we prove a criterium for an affine toric variety to be flexible, see Theorem~\ref{gib}. Also we give its reformulations in geometrical and in combinatorial terms, see Corollaries~\ref{geo}, \ref{vt} and \ref{dy}.

\begin{lemma}\label{obr}
Let $X$ be a flexible affine variety. Then the quasiaffine variety $X^{\mathrm{reg}}$ does not admit any nonconstant invertible functions.
\end{lemma}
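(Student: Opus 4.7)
The plan is to exploit the theorem from~\cite{AFKKZ} quoted above: flexibility of $X$ is equivalent to $\SAut(X)$ acting transitively on $X^{\mathrm{reg}}$. I will show that any $f \in \KK[X^{\mathrm{reg}}]^*$ is constant.

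Every $\mathbb{G}_a$-subgroup $\mathcal{H}_\delta \subset \Aut(X)$ acts by automorphisms of $X$, hence preserves $X^{\mathrm{reg}}$ and acts on it. For a point $x \in X^{\mathrm{reg}}$, consider the orbit morphism $\varphi_x \colon \mathbb{G}_a \to X^{\mathrm{reg}}$ sending $t$ to $\exp(t\delta)(x)$. The pullback $\varphi_x^* f = f \circ \varphi_x$ is an invertible regular function on $\mathbb{G}_a \cong \mathbb{A}^1$, and since $\KK[\mathbb{A}^1]^* = \KK^*$, it is a nonzero constant. In particular, $f$ takes the same value on $x$ and on $\exp(t\delta)(x)$ for every $t$, so $f$ is constant along every $\mathbb{G}_a$-orbit in $X^{\mathrm{reg}}$.

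Because $\SAut(X)$ is generated by its $\mathbb{G}_a$-subgroups, any two points $x,y$ lying in the same $\SAut(X)$-orbit are joined by a finite chain $x=x_0,x_1,\ldots,x_k=y$ in which each consecutive pair lies in a common $\mathbb{G}_a$-orbit, so $f(x)=f(y)$. Flexibility of $X$ gives that $\SAut(X)$ has a single orbit on $X^{\mathrm{reg}}$, and hence $f$ is constant on $X^{\mathrm{reg}}$. There is essentially no technical obstacle here: once the equivalence from~\cite{AFKKZ} is invoked, the argument reduces to the elementary fact that the units on $\mathbb{A}^1$ are the nonzero constants, combined with an orbit-chain argument for the group generated by $\mathbb{G}_a$-subgroups.
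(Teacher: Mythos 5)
Your proposal is correct and follows essentially the same route as the paper's proof: establish that any invertible $f\in\KK[X^{\mathrm{reg}}]^{*}$ is invariant under every $\mathbb{G}_a$-subgroup, conclude that it is $\SAut(X)$-invariant, and then use the transitivity of $\SAut(X)$ on $X^{\mathrm{reg}}$ coming from flexibility. The only (harmless) difference is in how invariance along a $\mathbb{G}_a$-orbit is obtained: you pull $f$ back along the orbit map and use $\KK[\mathbb{A}^1]^{*}=\KK^{*}$, whereas the paper cites the correspondence between $\mathbb{G}_a$-actions on quasiaffine varieties and locally nilpotent derivations and the fact that units lie in every LND kernel.
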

\begin{proof}
Since $X$ is flexible, $\mathrm{SAut}(X)$ acts on $X^{\mathrm{reg}}$ transitively. An automorphism takes smooth points to smooth points. Hence each $\mathbb{G}_a$-action on $X$ can be restricted to an action on $X^{\mathrm{reg}}$. By \cite{Ar} each $\mathbb{G}_a$-action on a quasiaffine variety corresponds to an LND of algebra of regular functions. But each invertible function belongs to kernels of all LNDs. Therefore, each invertible function $f$ is an $\mathrm{SAut}(X)$-invariant. Hence, sets $\{f=c\}$ are $\mathrm{SAut}(X)$-invariant for all $c\in\mathbb{K}$. If $f$ is nonconstant, this contradicts to flexibility of $X$. 
\end{proof}

\begin{theorem}\label{gib}
Let $X$ be a toric variety and $\sigma$ be the corresponding cone. Let us remove all extremal rays of the cone $\sigma$, that correspond to orbits, consisting of singular points. Let $\gamma$ be the cone generated by all other extremal rays. The variety $X$ is flexible if and only if  the cone $\gamma$ is not contained in any hyperspace.
\end{theorem}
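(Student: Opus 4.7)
The plan is to address the two directions separately, leveraging the structural lemmas already established.

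For the forward direction ($X$ flexible implies $\gamma$ not contained in a hyperplane), I would chain Lemmas~\ref{obr} and~\ref{co}. Flexibility forces $\KK[X^{\mathrm{reg}}]^{\times}=\KK^{\times}$ by Lemma~\ref{obr}, and the description $\KK[X^{\mathrm{reg}}]=\bigoplus_{m\in M\cap\gamma^{\vee}}\KK\chi^m$ from Lemma~\ref{co} shows that the invertibles correspond to the lattice points of $\gamma^{\vee}\cap(-\gamma^{\vee})$, the lineality space of $\gamma^{\vee}$. Triviality of this intersection is equivalent to $\gamma^{\vee}$ being a pointed cone, and dually to $\gamma$ spanning $N_{\QQ}$, i.e., $\gamma$ not being contained in any hyperplane.

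For the reverse direction, suppose $\gamma$ is full-dimensional. By the theorem of~\cite{AFKKZ} cited in the introduction, it suffices to prove that $\SAut(X)$ acts transitively on $X^{\mathrm{reg}}$. By Lemma~\ref{eu} each extremal ray $\rho$ of $\gamma$ supplies a Demazure root $e\in\mathscr R_{\rho}$ with a well-defined LND $\delta_e$ of $\KK[X]$. At a point $x_0$ of the open $T$-orbit one computes, using $T_{x_0}X\cong N_{\KK}$, that the tangent vector to the $\mathcal H_e$-orbit equals $\chi^{e}(x_0)\,n_{\rho}$, a nonzero multiple of $n_{\rho}$. Since $\gamma$ is full-dimensional the family $\{n_{\rho}\}$ spans $N_{\QQ}$, so these tangent vectors span $T_{x_0}X$; thus $\SAut(X)\cdot x_0$ is open in $X$ and in particular contains the whole open $T$-orbit.

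It remains to show that every smooth point lies in $\SAut(X)\cdot x_0$. I would argue by induction on the codimension of the $T$-orbit $O_{\tau}$ containing a given smooth $y$. The base case (the open orbit) is handled above. For the induction step, I would exploit the explicit formula $\delta_e(\chi^m)=\langle m,n_{\rho}\rangle\chi^{m+e}$ to locate, for a suitably chosen $\rho\subset\gamma$ and $e\in\mathscr R_{\rho}$, a monomial in the ideal $I_{\widehat{\tau}}$ whose $\delta_e$-image does not vanish at $y$; the $\mathcal H_e$-orbit of $y$ then escapes $\overline{O_{\tau}}$ into a strictly larger-dimensional $T$-orbit, and iterating reaches the open orbit.

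The main obstacle will be the inductive step above: the combinatorial choice of $\rho$ depends delicately on the position of $\tau$ relative to the smooth extremal rays, and in the nonnormal setting one must additionally treat edge cases such as orbits $O_{\tau}$ all of whose adjacent codimension-one orbits are singular, and the case $\tau=\sigma$ of an isolated smooth $T$-fixed point. Establishing, via the full-dimensionality of $\gamma$ and the inexhaustible supply of replicas $\{r+kv:k\in\NN\}$ in $\mathscr R_{\rho}$ used in the proof of Lemma~\ref{eu}, that a suitable root always exists should close the argument.
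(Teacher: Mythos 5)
Your forward direction and the first half of your reverse direction coincide with the paper's argument: the paper also chains Lemma~\ref{obr} with Lemma~\ref{co} (phrased contrapositively: a line $H^\bot\subset\gamma^\vee$ yields a nonconstant invertible on $X^{\mathrm{reg}}$), and it also picks $n$ smooth extremal rays $\rho_1,\dots,\rho_n$ whose primitive vectors form a basis of $N_\QQ$, produces well-defined LNDs $\delta_{e_i}$ via Lemma~\ref{eu}, and checks that the tangent vectors at $p=(1,\dots,1)$ span $T_pX$ because the matrix $\bigl(\langle m_j,n_{\rho_i}\rangle\bigr)$ is nonsingular. Up to that point your proposal is correct and essentially identical.

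The genuine gap is exactly the step you flag as ``the main obstacle'': you do not actually prove that every smooth point in a non-open orbit can be moved by $\SAut(X)$ into the open orbit; you only sketch an induction and assert that a suitable root ``should'' exist. As written, the inductive step is not established --- you never exhibit, for a given smooth non-open orbit $O_\tau$, a concrete well-defined $\delta_e$ and a monomial $\chi^m\in I_{\widehat\tau}$ with $\delta_e(\chi^m)\notin I_{\widehat\tau}$, which requires choosing $\rho\preccurlyeq\tau$ and controlling $m+e$ modulo the holes of $P$. The paper closes this step differently: for each face $\tau$ it takes a primitive $n\in N$ with $\tau=\sigma^\vee\cap\langle n\rangle^\bot$, observes that the resulting $\ZZ$-grading of $\KK[X]$ has no negative components, and invokes \cite[Proposition~3, Corollary~1]{GSh} (whose proof does not use normality) to conclude that the closure of a smooth non-open orbit is not $\SAut(X)$-invariant; iterating over orbits of increasing dimension then lands any smooth point in the open orbit, and flexibility of $p$ transports to all of $X^{\mathrm{reg}}$. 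Either you supply the combinatorial argument in full or you must cite such a result. One reassurance: the edge case you worry about --- a smooth orbit $O_\tau$ all of whose adjacent codimension-one orbits are singular --- cannot occur, since $X^{\mathrm{reg}}$ is open and $T$-invariant, so every orbit $O_\rho$ with $O_\tau\subset\overline{O_\rho}$ (i.e.\ $\rho\preccurlyeq\tau$) is automatically smooth; hence all extremal rays of $\tau$ lie in $\gamma$ and Lemma~\ref{eu} applies to each of them. Also note that your appeal to the equivalence of transitivity and flexibility from \cite{AFKKZ} requires $\dim X\geq 2$, whereas the paper argues with flexible points directly and needs no such restriction.
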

\begin{proof}
Assume $\gamma$ is contained in a hyperspace $H\subset N_\QQ$. Then the cone $\gamma^\vee$ contains a line $H^\bot$. By Lemma~\ref{co} existence of a line in $\gamma^\vee$ implies existence of a nonconstant invertible function on $X^{reg}$. By Lemma~\ref{obr} the variety $X$ is not flexible.

Now assume that the cone $\gamma$ is not contained in any hyperspace. Hence  there are $n$ extremal rays $\rho_1,\ldots,\rho_n$ of $\sigma$ corresponding to smooth orbits such that $\{n_{\rho_1},\ldots,n_{\rho_n}\}$ is a basis of~$N_\QQ$. By Lemma~\ref{eu} there exist Demazure roots $e_1,\ldots,e_n$, where $e_i\in{\mathscr R}_{\rho_i}$, such that all LNDs $\delta_{e_i}$ are well defined. Consider the point $p=(1,1,\ldots,1)\in T\subset X$. For the standard basis
$$\{m_1=(1,0,\ldots,0),\ldots,m_n=(0,\ldots,0,1)\}$$  
of the lattice $M$ we have $\chi^{m_i}(p)=1$. Therefore, for every $m\in M$ we have $\chi^m(p)=1$. Tangent vector to the $\mathbb{G}_a$-orbit
$\mathcal{H}_{e_i} p$ has the view $\left(\delta_{e_i}(\chi^{m_1})(p),\ldots,\delta_{e_i}(\chi^{m_n})(p)\right)$. And
$$
\delta_{e_i}(\chi^{m_j})(p)=\langle m_j,n_{\rho_i}\rangle \chi^{m_j+e_i}(p)=\langle m_j,n_{\rho_i}\rangle.
$$
To prove that tangent vectors in the point $p$ to all orbits $\mathcal{H}_{e_i}p$ are linearly independent, we are to prove that the determinant of the $n\times n$-matrix  
 with elements $\langle m_j,n_{\rho_i}\rangle$ is not equal to zero. This is true since $\{m_1,\ldots, m_n\}$ is a basis of $M_\QQ$, and $\{n_{\rho_1},\ldots,n_{\rho_n}\}$ is a basis of $N_\QQ$. 

We obtain that $p$ is a flexible point of the variety $X$. Since $p$ belongs to the open $T$-orbit, this orbit consists of flexible points.

For each face $\tau$ of the cone $\sigma^\vee$ we can take a primitive, i.e. with relatively prime coordinates, integer vector $n\in N$ such, that $\tau=\sigma^\vee\cap\langle n\rangle^\bot$. This gives the following grading $\KK[X]_i=\oplus_{\langle m,n\rangle=i}\KK \chi_m$. For this grading all negative homogeneous components are zero. It is follows from  \cite[Proposition~3, Corollary~1]{GSh} that the closure of a smooth non open orbit is not $\SAut(X)$-invariant. (Since the paper~\cite{GSh} devoted to normal varieties, in conditions of~\cite[Proposition~3, Corollary~1]{GSh} the variety is assumed to be normal. But in the proof the normality of the variety is not used. So this proposition is true for nonnormal varieties too.) Therefore, for each smooth orbit there exists an element in $\SAut(X)$ such that it takes a point in this orbit to a point in the open orbit. Hence all smooth points of  $X$ are flexible. Theorem \ref{gib} is proved.
\end{proof}

\begin{re}
If a toric variety $X$ is normal, then all orbits of codimension~1 are smooth. Hence, the cone  $\gamma$ coincide with $\sigma$. In this case the assertion of Theorem~\ref{gib} coincides with the result of~\cite[Theorem~2.1]{AKZ}.
\end{re}

Let us give some equivalent reformulations of Theorem~\ref{gib}. The first one gives a criterium of flexibility for toric variety in geometric terms, i.e. there is no combinatoric data of monoid $P$ corresponding to the variety $X$.

\begin{corollary}\label{geo}
A toric variety $X$ is flexible if and only if there is no regular function $f\in\KK[X]$ such, that the set of zeros of $f$ contains only singular points.
\end{corollary}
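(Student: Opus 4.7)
The plan is to derive both directions from Theorem~\ref{gib}, together with Lemma~\ref{obr} and Lemma~\ref{co}.

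For the forward implication, I would assume $X$ is flexible and pick any $f\in\KK[X]$ with $V(f)\subset X^{\mathrm{sing}}$. Then $f|_{X^{\mathrm{reg}}}$ is a nowhere-vanishing regular function on the dense quasi-affine subset $X^{\mathrm{reg}}\subset X$, so it is a unit in $\KK[X^{\mathrm{reg}}]$. By Lemma~\ref{obr} the only units of $\KK[X^{\mathrm{reg}}]$ are constants, hence $f|_{X^{\mathrm{reg}}}\in\KK^\times$. Since $X^{\mathrm{reg}}$ contains the open $T$-orbit and is therefore dense in the irreducible variety $X$, $f$ is that same constant on all of $X$. This rules out any nonconstant $f$ with $V(f)\subset X^{\mathrm{sing}}$.

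For the converse I would argue the contrapositive: if $X$ is not flexible, I want to exhibit a nonconstant $f\in\KK[X]$ with $V(f)\subset X^{\mathrm{sing}}$. By Theorem~\ref{gib}, non-flexibility means $\gamma$ is contained in some hyperspace $H\subset N_\QQ$, so $H^\bot\subset M_\QQ$ is a nonzero subspace sitting inside the lineality space of $\gamma^\vee$. Lemma~\ref{co} then tells me that every character $\chi^m$ with $m\in M\cap H^\bot$ is a unit in $\KK[X^{\mathrm{reg}}]$. If I can locate a nonzero $m_0$ of this form that in addition lies in the weight monoid $P$, then $f=\chi^{m_0}$ is a nonconstant element of $\KK[X]$ whose restriction to $X^{\mathrm{reg}}$ is invertible, forcing $V(f)\subset X^{\mathrm{sing}}$.

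The main obstacle is the last step: producing a nonzero $m_0\in P\cap H^\bot$. The intersection $H^\bot\cap\sigma^\vee$ is a face of $\sigma^\vee$, corresponding under the orbit-face bijection to the smallest face of $\sigma$ containing all smooth extremal rays; in the affine toric setting this face contains nonzero lattice points, and Lemma~\ref{mg} together with finite generation of $P$ allows one to replace a general lattice point in $H^\bot\cap\sigma^\vee$ by a positive multiple which is genuinely a weight of $X$. Once this combinatorial input is in place, $f=\chi^{m_0}$ completes the proof and the corollary follows from the chain Theorem~\ref{gib}$\Rightarrow$Lemma~\ref{co}$\Rightarrow$the desired monomial.
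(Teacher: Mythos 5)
Your forward implication is correct and coincides with the paper's: a regular function vanishing only at singular points restricts to a unit of $\KK[X^{reg}]$, and Lemma~\ref{obr} forces it to be constant. The gap is in the converse, precisely at the step you yourself flag as ``the main obstacle'', and your proposed repair does not work. First, $H^\bot\cap\sigma^\vee$ is in general not a face of $\sigma^\vee$ (a line meeting a full-dimensional cone); the relevant face is $\gamma^\bot\cap\sigma^\vee=\bigcap_{\rho\subset\gamma}\widehat{\rho}$, dual to the \emph{smallest} face of $\sigma$ containing all smooth extremal rays. Second, and fatally, that face can be the zero face even when $\gamma$ lies in a hyperplane, because the smooth rays may span a proper subspace of $N_\QQ$ without lying in a common proper face of $\sigma$. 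Concretely, let $\sigma\subset N_\QQ=\QQ^3$ be the cone over a square with generators $v_1=(1,0,1)$, $v_2=(0,1,1)$, $v_3=(-1,0,1)$, $v_4=(0,-1,1)$, and let $P$ be the set of all $m\in\sigma^\vee\cap M$ with $\langle m,v_2\rangle\neq 1$ and $\langle m,v_4\rangle\neq 1$; this is a finitely generated submonoid generating $M$. The faces $\widehat{\rho_1},\widehat{\rho_3}$ are almost saturated (with saturation points $(\mp 2,0,2)$) while $\widehat{\rho_2},\widehat{\rho_4}$ are nowhere saturated, so $\gamma=\cone(v_1,v_3)$ lies in a hyperplane and $X$ is not flexible by Theorem~\ref{gib}; yet $\gamma^\bot\cap\sigma^\vee=\{0\}$, so the only $m_0\in P$ with $\chi^{m_0}$ invertible on $X^{reg}$ is $m_0=0$. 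Hence there is no nonconstant $f\in\KK[X]$ whose zero set lies in $X^{sing}$, and the lattice point you need cannot be produced.

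This is not only a defect of your route: the paper's own proof silently avoids the issue by arguing entirely with invertible elements of $\KK[X^{reg}]$ (via Lemma~\ref{co}), which exist exactly when $\gamma^\vee$ contains a line, and it never descends back to $\KK[X]$. What that argument actually establishes is that $X$ is flexible if and only if $X^{reg}$ admits no nonconstant invertible regular function; in the example above $\chi^{(0,1,0)}$ is such a unit of $\KK[X^{reg}]$, but it does not extend to $X$. If the corollary is read with $f$ a regular function on $X^{reg}$ (nowhere vanishing there) rather than $f\in\KK[X]$, your argument closes immediately and agrees with the paper's; with the literal reading $f\in\KK[X]$, the missing step is not a technicality but a genuine failure of the equivalence.
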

\begin{proof}
The set of zeros of $f$ contains only singular points if and only if  $f$  is invertible on $X^{reg}$. Existing of an invertible function in $\KK[X^{reg}]$ is equivalent to existing of  an invertible homogeneous function $\chi^m\in\KK[X^{reg}]$, $m\neq 0$.  The inverse to $\chi^m$ element  is~$\chi^{-m}$. So in $\KK[X^{reg}]$ there is a homogeneous invertible element $\chi^m$ if and only if there is a line in $\gamma^\vee$. By Theorem~\ref{gib} the cone $\gamma^\vee$ contains a line if and only if $X$ is not flexible.
\end{proof}

The second reformulation of Theorem~\ref{gib} gives a criterium for a toric variety to be flexible in terms of the weight monoid $P$. We obtain it just by union of Theorem~\ref{gib} and Lemma~\ref{eu}. 
\begin{corollary}\label{vt}
A toric variety $X$ is flexible if and only if  there is no hyperspace in $N_\QQ$ containing all extremal rays $\rho_i$ of the cone  $\sigma$ such, that the face $\widehat{\rho_i}$ is almost saturated. 
\end{corollary}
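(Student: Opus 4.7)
The plan is to observe that Corollary~\ref{vt} is nothing more than the conjunction of Theorem~\ref{gib} and Lemma~\ref{eu}, so the proof is a direct translation between the two characterizations of ``smooth orbit of codimension one''.

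First I would recall the cone $\gamma$ from Theorem~\ref{gib}: by construction, $\gamma$ is generated exactly by those extremal rays $\rho$ of $\sigma$ for which the orbit $O_\rho$ consists of smooth points. Lemma~\ref{eu} gives the equivalence between condition (3), that $O_\rho$ consists of smooth points, and condition (1), that the complementary face $\widehat{\rho}$ of $\sigma^\vee$ is almost saturated. Therefore the set of extremal rays generating $\gamma$ coincides precisely with the set of extremal rays $\rho_i$ of $\sigma$ such that $\widehat{\rho_i}$ is almost saturated.

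Next, I would simply invoke Theorem~\ref{gib}: the variety $X$ is flexible if and only if $\gamma$ is not contained in any hyperspace of $N_\QQ$. Substituting the combinatorial description of the generators of $\gamma$ obtained in the previous step, this condition reads exactly: no hyperspace of $N_\QQ$ contains all extremal rays $\rho_i$ of $\sigma$ with $\widehat{\rho_i}$ almost saturated. This is the statement of the corollary.

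There is no real obstacle here; the only thing to be careful about is the bookkeeping of which extremal rays generate $\gamma$, namely those corresponding to smooth codimension-one orbits, and not to confuse ``almost saturated face $\widehat{\rho}$ of $\sigma^\vee$'' (a condition on $P$ inside the dual cone) with any condition directly phrased on the ray $\rho$ itself. Once Lemma~\ref{eu} is applied, the translation is automatic and no further argument is required.
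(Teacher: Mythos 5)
Your proof is correct and is exactly the paper's argument: the paper states that Corollary~\ref{vt} is obtained ``just by union of Theorem~\ref{gib} and Lemma~\ref{eu}'', which is precisely your translation of the rays generating $\gamma$ (smooth codimension-one orbits) into the rays $\rho_i$ with $\widehat{\rho_i}$ almost saturated via the equivalence of conditions (1) and (3) in Lemma~\ref{eu}. No gaps.
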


A bounded part of the monoid $P$ does not affect almost saturation of faces. That is if we have two monoids $P$ and  $P'$ that  differ only in finite number of elements, then they corresponds to the same cone and a face $\tau\preccurlyeq \sigma^\vee$ is almost saturated in $P$, if and only if it is almost saturated in~$P'$. This implies the following assertion.
\begin{corollary}
If the monoid $P$ has only finite number of holes and the cone $\sigma$ is not contained in any hyperspace, then the variety $X$ is flexible. 
\end{corollary}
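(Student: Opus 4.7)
The plan is to reduce to Corollary~\ref{vt} by showing that the finite-hole hypothesis forces every face $\widehat{\rho}$ (for $\rho$ an extremal ray of $\sigma$) to be almost saturated. Once this is established, the set of extremal rays $\rho$ with $\widehat{\rho}$ almost saturated coincides with the whole set of extremal rays of $\sigma$, which by assumption spans no hyperspace, and Corollary~\ref{vt} yields flexibility of $X$.

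To exhibit a saturation point in $\widehat{\rho}$, I would first choose a ``deep interior'' vector $v\in P\cap\widehat{\rho}$. Concretely, among a system of generators $a_1,\ldots,a_r$ of $P$, the extremal rays of $\widehat{\rho}$ are extremal rays of $\sigma^\vee$ and each is represented by some $a_i$; taking $v$ to be the sum of those $a_i$ lying in $\widehat{\rho}$ yields an element of $P\cap\widehat{\rho}$ sitting in the relative interior of $\widehat{\rho}$. Since $\sigma$ is full-dimensional, $\sigma^\vee$ is pointed, and $v\neq 0$ (using $\dim\widehat{\rho}\geq 1$, i.e.\ $n\geq 2$), so $-v\notin\sigma^\vee$. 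Then for every hole $h\in P_{sat}\setminus P$, only finitely many $k\in\ZZ_{\geq 0}$ can satisfy $h-kv\in\sigma^\vee$: otherwise, dividing by $k$ and passing to the limit in the closed cone $\sigma^\vee$ would force $-v\in\sigma^\vee$. Because the set of holes is finite by hypothesis, a single $k$ works for every hole simultaneously; for such $k$ no hole lies in $w+\sigma^\vee$ where $w=kv$, hence $(w+\sigma^\vee)\cap M\subset P_{sat}$ actually sits inside $P$. Thus $w\in P\cap\widehat{\rho}$ is a saturation point and $\widehat{\rho}$ is almost saturated.

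The argument is quite direct; the only delicate point is the relative-interior choice of $v$, which is what allows the divide-by-$k$ limit to produce the contradiction $-v\in\sigma^\vee$. This needs $\widehat{\rho}$ to be positive-dimensional, i.e.\ $n\geq 2$. In the degenerate one-dimensional case, a toric curve with any hole already fails to be flexible, as the cuspidal curve $\{x^2=y^3\}$ illustrates, so the real content of the corollary appears in dimension at least two.
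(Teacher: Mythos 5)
Your argument is correct and is essentially the paper's own: the authors simply assert that a bounded (here finite) set of holes cannot affect almost saturation of any face and then invoke Corollary~\ref{vt}, whereas you make that assertion precise by producing an explicit saturation point $kv$ on each facet via the divide-by-$k$ limit argument (for which, note, pointedness of $\sigma^\vee$ and $v\neq 0$ already suffice --- the relative-interior choice of $v$ is not actually needed). Your closing caveat is also well taken: for $n=1$ the facet $\widehat{\rho}$ is $\{0\}$, the construction gives nothing, and the corollary as literally stated fails for the cuspidal cubic $\{x^2=y^3\}$ (finitely many holes, $\sigma$ a full ray, yet not flexible), so the statement must be read with the implicit assumption $\dim X\geq 2$.
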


It is easy to see that for a two-dimensional cone the condition of almost saturation of both faces of codimension one is equivalent to finiteness of number of holes. In arbitrary case finiteness of number of holes is not necessary for flexibility. It is possible to locate infinite number of holes in such a way that they do not effect almost saturating of faces. But all holes should locate "near" nowhere saturated faces of codimension one or "near" faces of codimension not less then 2. 
\begin{lemma}\label{per}
Let $P$ be the monoid, corresponding to the cone $\sigma$. Let $\rho_1,\ldots, \rho_k$ be extremal rays of $\sigma$ and $n_{1},\ldots, n_k$ be primitive vectors on them. For a fixed positive integer $d\in\NN$ we denote
$$
L_i(d)=\{m\in M\cap \sigma^\vee\mid \langle m,n_i\rangle\leq d\}.
$$
Then the following conditions are equivalent:

1) faces $\widehat{\rho_1},\ldots,\widehat{\rho_s}$, $s\leq k$ are almost saturated;

2) there exists a constant $c\in\NN$ such, that for each hole $u$ of the monoid $P$ either $u\in L_t(c)$ for some $t>s$, or $u\in L_i(c)\cap L_j(c)$, where $i,j\leq s$.
\end{lemma}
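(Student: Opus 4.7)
The plan is to translate the almost-saturation of a face $\widehat{\rho_i}$ into a control statement on where holes of $P$ may lie. The key geometric observation is that a lattice point $u$ fails to lie in $p + \sigma^\vee$ precisely when $\langle u, n_j\rangle < \langle p, n_j\rangle$ for some $j \leq k$; and when $p \in \widehat{\rho_i}$ one has $\langle p, n_i\rangle = 0$, so because $u \in \sigma^\vee$ the witness index $j$ is automatically different from $i$. This is what links the almost-saturation of faces to membership of holes in slabs $L_j(c)$.

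For the direction $1 \Rightarrow 2$, I would fix saturation points $p_1, \ldots, p_s \in P$ inside the faces $\widehat{\rho_1}, \ldots, \widehat{\rho_s}$ and set $c := \max_{i \leq s,\, j \leq k} \langle p_i, n_j\rangle$. For every hole $u$ and every $i \leq s$ the above observation yields an index $j(i) \neq i$ with $\langle u, n_{j(i)}\rangle \leq c$. If some $j(i)$ exceeds $s$, the first alternative of (2) holds. Otherwise all $j(i) \in \{1, \ldots, s\}$; if they all coincided with a single value $j_0$, the constraint $j_0 \neq i$ for every $i \leq s$ would force $j_0 > s$, a contradiction. Hence there are two indices $i \neq i'$ with distinct $j(i), j(i') \in \{1, \ldots, s\}$, placing $u$ in $L_{j(i)}(c) \cap L_{j(i')}(c)$.

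For the converse $2 \Rightarrow 1$, fix $i_0 \leq s$ and aim to produce a saturation point inside $\widehat{\rho_{i_0}}$. Apply Lemma~\ref{mg} to the orbit closure $\overline{O_{\rho_{i_0}}}$, whose weight monoid is $P \cap \widehat{\rho_{i_0}}$ in the sublattice $L \subset M$ it generates. This yields $q \in P \cap \widehat{\rho_{i_0}}$ with $(q + \widehat{\rho_{i_0}}) \cap L \subset P$. Pick $v \in L$ in the relative interior of $\widehat{\rho_{i_0}}$, so that $\langle v, n_j\rangle > 0$ for every $j \neq i_0$, and set $p := q + Nv$ for $N$ so large that $\langle p, n_j\rangle > c$ for all $j \neq i_0$ with $j \leq k$. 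By construction $p \in P \cap \widehat{\rho_{i_0}}$. If some hole $u$ lay in $p + \sigma^\vee$, then $\langle u, n_j\rangle \geq \langle p, n_j\rangle > c$ for every $j \neq i_0$; but the first alternative of (2) would force $\langle u, n_t\rangle \leq c$ for some $t > s$ (hence $t \neq i_0$), while the second alternative would force $\langle u, n_{j'}\rangle \leq c$ for at least one $j' \in \{j_1, j_2\}$ different from $i_0$ (possible since $j_1 \neq j_2$). Either case contradicts the estimate, so $p$ is a saturation point and $\widehat{\rho_{i_0}}$ is almost saturated.

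The main obstacle is producing a saturation point sitting inside the face $\widehat{\rho_{i_0}}$ and having simultaneously large pairings against all the remaining $n_j$. This forces an application of Lemma~\ref{mg} to the sub-monoid corresponding to the orbit closure, combined with a translation inside the face by a vector from the relative interior of $\widehat{\rho_{i_0}}$ to inflate the off-face coordinates. One should also keep in mind that the second alternative of (2) has to be read with $i \neq j$; otherwise the condition collapses into the first and becomes too weak to detect almost-saturation, since holes could then accumulate along a single face $\widehat{\rho_i}$ without preventing almost-saturation of that face.
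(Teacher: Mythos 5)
Your proof is correct and follows essentially the same route as the paper: both directions rest on the observation that a saturation point $p$ in a facet $\widehat{\rho_i}$ forces every hole $u$ to satisfy $\langle u,n_j\rangle<\langle p,n_j\rangle$ for some $j\neq i$, since $\langle p,n_i\rangle=0$ and $u\in\sigma^\vee$. The only differences are minor bookkeeping: in $1\Rightarrow 2$ you replace the paper's use of a global saturation point (Lemma~\ref{mg}) by a pigeonhole over the $s$ facet saturation points, and in $2\Rightarrow 1$ you invoke Lemma~\ref{mg} on the orbit closure to guarantee $p\in P$, whereas the paper simply takes a lattice point $v$ of the facet deep enough that $v\notin L_j(c)$ for $j\neq i$ and lets $v\in P$ follow automatically from $v\in(v+\sigma^\vee)\cap M\subset P$.
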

\begin{proof}
$1\Rightarrow 2$. By Lemma~\ref{mg} there is a saturation point $v$ in $\sigma^\vee$. Let $x_i=\langle v,n_i\rangle$. Let us take $d>x_i$ for all $i$. If for all $i$ we have 
$u\notin L_i(d)$, then $\langle u,n_i\rangle>d>\langle v,n_i\rangle$, that is $u-v\in \sigma^\vee\cap M$. Since $v$ is a saturation point, we obtain $u\in P$. This contradicts to the condition that $u$ is a hole. Therefore, there is $i$ such, that $u\in L_i(d)$. 

Let $i\leq s$. Then there is a saturation point $w$ in the face $\widehat{\rho_i}$. Let us denote $y_j=\langle w,n_j\rangle$. Take $c_i>y_j$ for each $j$. If for all $j\neq i$ we have 
$u\notin L_j(c_i)$, then $\langle u,n_j\rangle>c_i>\langle w,n_j\rangle$. Since $\langle w,n_i\rangle=0$, we obtain $u-w\in \sigma^\vee\cap M$. This contradicts the fact that $w$ is a saturation point and $u$ is a hole. Thus there exists $j\neq i$ such that $u\in L_j(c_i)$.

Put $c=\max\{d,c_1,\ldots, c_s\}$. Then $u\in L_i(c)$ and if $i\leq s$, then $u\in L_j(c)$ for some $j\neq i$.

$2\Rightarrow 1.$ Let us fix $i\leq s$. There is a point $v$ in $\widehat{\rho_i}\cap M$ such that $v\notin L_j(c)$ for all $j\neq i$. Therefore, $v$ is a saturation point. 
\end{proof}

Lemma \ref{per} and Corollary \ref{vt} allows to obtain one more reformulation of Theorem~\ref{gib}, which gives a criterium of flexibility of a toric variety in terms of holes in weight monoid. 
\begin{corollary}\label{dy}
Let $X$ be a toric variety. Denote by $P$ the weight monoid and by $\sigma$ the corresponding cone. Then $X$ is flexible if and only if there is an ordering $\rho_1,\ldots, \rho_k$ of extremal rays of the cone~$\sigma$ such that 

1) rays $\rho_1,\ldots, \rho_n$ are not contained in any hyperspace,

2) there is a positive integer $c$ such, that for every hole $u$ of the monoid $P$ either $u\in L_t(c)$ for some $t>n$, or $u\in L_i(c)\cap L_j(c)$, where $i,j\leq n$.
\end{corollary}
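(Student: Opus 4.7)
The plan is to derive Corollary~\ref{dy} by combining Corollary~\ref{vt} with Lemma~\ref{per}, essentially rearranging the data into the required form.

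First I would handle the forward implication. Suppose $X$ is flexible. By Corollary~\ref{vt}, the set of extremal rays $\rho$ of $\sigma$ for which the face $\widehat{\rho}$ is almost saturated is not contained in any hyperspace of $N_\QQ$. Since $\dim N_\QQ = n$, this set must span $N_\QQ$, hence contains $n$ rays whose primitive vectors form a basis. I would order the extremal rays of $\sigma$ so that these selected rays come first as $\rho_1,\ldots,\rho_n$, with the remaining rays listed in arbitrary order. Condition $1)$ is then automatic. Applying Lemma~\ref{per} with $s=n$ to the almost saturated faces $\widehat{\rho_1},\ldots,\widehat{\rho_n}$ produces a constant $c\in\NN$ such that every hole $u\in P$ either lies in $L_t(c)$ for some $t>n$, or in $L_i(c)\cap L_j(c)$ for some $i,j\leq n$; this is condition $2)$.

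For the converse, suppose conditions $1)$ and $2)$ hold for some ordering. I would invoke the implication $2)\Rightarrow 1)$ of Lemma~\ref{per} with $s=n$ to conclude that the faces $\widehat{\rho_1},\ldots,\widehat{\rho_n}$ are all almost saturated. Condition $1)$ says these $n$ rays are not contained in any hyperspace of $N_\QQ$, so a fortiori the set of all extremal rays of $\sigma$ whose corresponding face is almost saturated is not contained in any hyperspace. Corollary~\ref{vt} then yields flexibility of $X$.

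The argument is essentially bookkeeping, so there is no genuine obstacle. The only point worth flagging is the extraction of $n$ almost saturated rays whose primitives form a basis in the forward direction: this works because a finite collection of vectors in $N_\QQ\cong\QQ^n$ not contained in any hyperplane must span $N_\QQ$ and therefore contains a basis. A minor stylistic choice is whether to demand the first $n$ rays form a basis or merely span $N_\QQ$; either suffices for condition $1)$, and the basis formulation makes the appeal to Lemma~\ref{per} with $s=n$ perfectly clean.
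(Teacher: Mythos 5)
Your argument is correct and follows exactly the route the paper intends: the paper gives no written proof of Corollary~\ref{dy} but states that it is obtained by combining Lemma~\ref{per} with Corollary~\ref{vt}, which is precisely your bookkeeping (choosing an ordering in which $n$ almost saturated rays spanning $N_\QQ$ come first and applying Lemma~\ref{per} with $s=n$ in both directions). No discrepancies to report.
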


\section{Rigid toric varieties}

In this section we prove a criterium for an affine toric variety $X$ to be rigid. Also we give an explicit description of the automorphism group of rigid affine toric varieties.\begin{theorem}\label{zhes}
An affine toric variety $X$ is rigid if and only if the smooth locus $X^{reg}$ coincides with the open $T$-orbit.
\end{theorem}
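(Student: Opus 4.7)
I plan to deduce both implications from Lemma~\ref{eu}, which exactly characterizes when a codimension-one orbit $O_\rho$ consists of smooth points.

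For the direction that rigidity implies $X^{reg}$ equals the open $T$-orbit, I will prove the contrapositive. Assume some orbit $O_\tau$ with $\tau$ a proper face of $\sigma$ meets $X^{reg}$; by $T$-invariance of the smooth locus, $O_\tau\subset X^{reg}$. Choose an extremal ray $\rho\preccurlyeq\tau$ of $\sigma$: a one-dimensional face of $\tau$ when $\dim\tau\geq 1$, or any extremal ray of $\sigma$ when $\tau=\{0\}$ (using $\{0\}\preccurlyeq\rho$ for every ray). The orbit-face correspondence gives $O_\tau\subset\overline{O_\rho}$, so $X^{reg}\cap\overline{O_\rho}$ is a nonempty open subset of the irreducible $T$-variety $\overline{O_\rho}$; it meets the dense $T$-orbit $O_\rho$, and by $T$-invariance $O_\rho\subset X^{reg}$. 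Lemma~\ref{eu} then produces a Demazure root $e\in{\mathscr R}_\rho$ with $\delta_e$ a well-defined nonzero LND on $\KK[X]$, contradicting $\SAut(X)=\{1\}$.

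For the converse, I start with a nonzero LND $\delta$ on $\KK[X]$ and aim to produce a smooth codimension-one orbit. Using the $M$-grading on $\KK[X]$ and the vertex-of-Newton-polytope principle recalled in Section~\ref{lnd}, I may replace $\delta$ with a nonzero $M$-homogeneous LND of some degree $e\in M$; still call it $\delta$. Writing $\delta(\chi^m)=c_m\chi^{m+e}$, the Leibniz rule forces $m\mapsto c_m$ to be additive on $P$, hence $\ZZ$-linear: $c_m=\langle m,n\rangle$ for a unique $n\in N_\QQ$. Local nilpotency together with the requirement $m+e\in P$ whenever $c_m\neq 0$, and the classical fact that $\Ker\delta$ has transcendence degree one less than $\KK[X]$, force $n$ to be the primitive lattice generator $n_\rho$ of some extremal ray $\rho$ of $\sigma$, with $\langle e,n_\rho\rangle=-1$ and $\langle e,n_\xi\rangle\geq 0$ for all other extremal rays $\xi$; that is, $e\in{\mathscr R}_\rho$ and $\delta=\delta_e$ is well defined on $\KK[X]$. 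Lemma~\ref{eu} then yields $O_\rho\subset X^{reg}$, so $X^{reg}$ strictly contains the open $T$-orbit.

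The main technical step is the direct classification of $M$-homogeneous LNDs of the possibly nonnormal algebra $\KK[X]$ as Demazure-type derivations $\delta_e$; in the normal case this is Demazure's theorem~\cite{D}, and in the nonnormal case the same algebraic calculation applies, with the compatibility condition $(P+e)\cap P_{sat}\subset P$ from Section~\ref{lnd} encoding the well-definedness of $\delta_e$ on the non-saturated subalgebra. The transcendence-degree identity is what forces $n$ onto an extremal ray rather than into the interior of a higher-dimensional face of $\sigma$, via the explicit kernel $\bigoplus_{m\in P\cap n^\perp}\KK\chi^m$.
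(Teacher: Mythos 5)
Your first implication (rigid $\Rightarrow$ $X^{reg}$ equals the open orbit) is essentially the paper's argument: both proofs pass from a smooth nonopen orbit $O_\tau$ to a smooth codimension-one orbit $O_\rho$ with $\rho\preccurlyeq\tau$ an extremal ray, using openness and $T$-invariance of the smooth locus, and then invoke Lemma~\ref{eu}. (A small slip: the case $\tau=\{0\}$ corresponds to the open orbit and cannot arise once the conclusion is negated correctly, and your parenthetical ``$\{0\}\preccurlyeq\rho$'' points the face relation the wrong way; neither affects the substance.)

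The converse is where you genuinely diverge, and your route is much heavier than the paper's. The paper's argument is short and elementary: if $X^{reg}$ coincides with the open orbit then $X^{reg}\cong T$, so $\KK[X^{reg}]$ is generated by invertible functions; every invertible function lies in the kernel of every LND, hence $\KK[X^{reg}]$ admits no nonzero LND; and any nontrivial $\GA$-action on $X$ restricts to a nontrivial $\GA$-action on the dense invariant open set $X^{reg}$, which by the quasiaffine correspondence of~\cite{Ar} would produce a nonzero LND of $\KK[X^{reg}]$. You instead classify all $M$-homogeneous LNDs of the (possibly nonnormal) algebra $\KK[X]$ as Demazure derivations $\delta_e$ and then run Lemma~\ref{eu} in the direction $2\Rightarrow 3$. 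That strategy does work and yields more (a full description of homogeneous LNDs, which the paper never needs), but the classification is nowhere proved in the paper --- Section~\ref{lnd} only goes from Demazure roots to LNDs, never the reverse --- and your sketch leaves the real work undone. Concretely: additivity of $m\mapsto c_m$ a priori only gives $n\in N\otimes_\ZZ\KK$; the transcendence-degree identity shows $P\cap n^{\bot}$ spans a hyperplane but does not by itself show that $n^{\bot}$ supports $\sigma^\vee$ --- one must use local nilpotency to exclude $P$ meeting both open half-spaces (the sign of $\langle e,n\rangle$ cannot be both negative and positive); and $\langle e,n_\rho\rangle=-1$, rather than an arbitrary negative integer, requires noting that the values $\langle m,n_\rho\rangle$ for $m\in P$ include coprime positive integers. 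These points are all standard (this is Demazure's computation~\cite{D}, adapted to the nonsaturated monoid), but if you take this route you must either carry them out or cite a reference valid for nonnormal toric varieties; as written, the main technical step is asserted rather than established, whereas the paper's two-line argument avoids it entirely.
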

\begin{proof}
Smooth locus is open. Hence if we are given by a nonopen $T$-orbit on $X$, then each orbit containing in its closure the given one is smooth. Hence there is an orbit of codimension one consisting of smooth points. By Lemma~\ref{eu} there exists a nonzero LND~$\delta_e$ of $\KK[X]$, i.e. the variety $X$ is not rigid.

Now let $X^{reg}$ coincide with the open $T$-orbit. Then $X^{reg}\cong T$, and hence, $\KK[X^{reg}]$ is generated by invertible functions. Therefore, $\KK[X^{reg}]$ does not admit any nonzero LND, i.e. $X^{reg}$ is rigid. Since each nontrivial $\mathbb{G}_a$-action on $X$ induces a nontrivial $\mathbb{G}_a$-action on $X^{reg}$, the variety $X$ is rigid.
\end{proof}
\begin{re}
Lemma~\ref{eu} gives a criterium of rigidity for a toric variety $X$ in terms of its weight monoid $P$. The variety $X$ is rigid if and only if all faces of codimension one of the cone $\sigma^\vee$ are nowhere saturated. 
\end{re}

The automorphism group of a rigid variety possess a unique maximal torus, which is a normal subgroup of $\mathrm{Aut}(X)$, see. \cite{AG}. Using this we can describe the automorphism group of a rigid affine toric variety.

\begin{theorem}\label{autzhes}
Let  $X$ be a rigid affine toric variety. The automorphism group $\Aut(X)$ is isomorphic to a semidirect product $S(X)\rightthreetimes T$. 
\end{theorem}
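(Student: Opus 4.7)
The plan is to combine the cited normality of $T$ in $\Aut(X)$ (from \cite{AG}, which applies precisely because $X$ is rigid) with the weight decomposition $\KK[X]=\bigoplus_{m\in P}\KK\chi^m$ to build a split short exact sequence
$$1\longrightarrow T\longrightarrow \Aut(X)\stackrel{\pi}{\longrightarrow} S(X)\longrightarrow 1,$$
with splitting $\alpha$ from Definition~\ref{ops}.

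First, I would fix an arbitrary $\varphi\in\Aut(X)$. Since $T$ is normal in $\Aut(X)$, we have $\varphi T\varphi^{-1}=T$, so $\varphi$ permutes $T$-eigenspaces on $\KK[X]$. Each graded component $\KK[X]_m=\KK\chi^m$ is one-dimensional, so there is a well-defined bijection $\pi_\varphi\colon P\to P$ determined by $\varphi(\KK\chi^m)=\KK\chi^{\pi_\varphi(m)}$. The multiplicativity of $\varphi$ together with $\chi^{m_1}\chi^{m_2}=\chi^{m_1+m_2}$ forces $\pi_\varphi(m_1+m_2)=\pi_\varphi(m_1)+\pi_\varphi(m_2)$, so $\pi_\varphi$ is additive on $P$. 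Because the $T$-action on $X$ is effective, $P$ generates $M$ as a group, hence $\pi_\varphi$ extends uniquely to a $\ZZ$-linear automorphism of $M$ preserving $P$, i.e.~an element of $St_{\GL_n(\ZZ)}(P)\cong S(X)$. This defines a group homomorphism $\pi\colon\Aut(X)\to S(X)$.

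Second, I would check that $\pi\circ\alpha=\mathrm{id}_{S(X)}$, which is immediate from $\alpha(\psi)(\chi^m)=\chi^{\psi(m)}$, and that $\ker\pi=T$. For the latter, if $\pi(\varphi)=\mathrm{id}$ then $\varphi(\chi^m)=c_m\chi^m$ for some $c_m\in\KK^\times$, and multiplicativity gives $c_{m_1+m_2}=c_{m_1}c_{m_2}$ on $P$. Since $P$ generates $M$, the map $m\mapsto c_m$ extends to a group homomorphism $M\to\KK^\times$, i.e.~an element of $T$; the reverse inclusion $T\subseteq\ker\pi$ is clear. Combined with the existence of the section $\alpha$, this yields $\Aut(X)\cong S(X)\rightthreetimes T$.

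The main obstacle is really just the invocation of the cited result of \cite{AG} guaranteeing that, for rigid $X$, the torus $T$ is the unique maximal torus of $\Aut(X)$ and hence is normal; once this is in hand every remaining step is formal bookkeeping with the $P$-grading. A minor point worth verifying carefully is that $\pi$ is a homomorphism and that $\alpha$ is a genuine \emph{right} inverse (with the correct conjugation action making $S(X)\rightthreetimes T$ into the claimed semidirect product), but both reduce to tracking weights through $\varphi(\chi^m)$.
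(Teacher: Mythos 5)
Your proposal is correct and follows essentially the same route as the paper: both arguments invoke the normality of $T$ in $\Aut(X)$ from \cite{AG}, deduce that any automorphism permutes the weight spaces $\KK\chi^m$ via an element of $St_{\GL_n(\ZZ)}(P)$, and identify the remaining diagonal part with an element of $T$. Your packaging as a split short exact sequence with section $\alpha$ is just a slightly more formal presentation of the paper's statement that $S(X)$ and $T$ generate $\Aut(X)$ and intersect trivially.
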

\begin{proof}
Let $\psi$ be an automorphism of $\KK[X]$. The torus $T$ is a normal subgroup of $\Aut(X)$, hence the automorphism $\psi$ takes $M$-homogeneous functions to $M$-homogeneous.  Since $\psi$ respects the multiplication, there exists a linear operator $\varphi\colon M_\QQ\rightarrow M_\QQ$ such that $\varphi(M)=M$ and for each $m\in M$ there exists a nonzero constant $\lambda$ such, that $\psi(\chi^m)=\lambda\chi^{\varphi(m)}$. Let us consider the automorphism $\zeta=\psi\circ \alpha(\varphi)^{-1}$. We have $\zeta(\chi^m)=\lambda \chi^m$. Since $\zeta$ is an automorphism, the constants $\lambda$ are organized in such a way that the action of $\zeta$ on $\KK[X]$ coincide with the action of an element of the torus $T$. So we see that $S(X)$  and  $T$ generate $\Aut(X)$.  It easy to see that the intersection of this subgroups is trivial. Since $T$ is a normal subgroup of $\Aut(X)$, we obtain $\Aut(X)\cong S(X)\rightthreetimes T$.
\end{proof}

\begin{ex}
Let us consider the toric variety $X$, corresponding to the monoid $P$ consisting of all integer points $(a,b)$ such that $a\geq 0, b\geq 0$, except all points $(0,2k+1)$ and all points $(2k+1,0)$. This variety is rigid. Indeed both one-dimensional faces are nowhere saturated. 
$$
\begin{picture}(100,93)
\put(90,80){$\sigma^\vee$}
\put(0,0){\vector(1,0){100}}
\put(0,0){\vector(0,1){94}}
\put(0,0){\circle*{4}}
\put(0,20){\circle{4}}
\put(0,40){\circle*{4}}
\put(0,60){\circle{4}}
\put(0,80){\circle*{4}}
\put(20,0){\circle{4}}
\put(20,20){\circle*{4}}
\put(20,40){\circle*{4}}
\put(20,60){\circle*{4}}
\put(20,80){\circle*{4}}
\put(40,0){\circle*{4}}
\put(40,20){\circle*{4}}
\put(40,40){\circle*{4}}
\put(40,60){\circle*{4}}
\put(40,80){\circle*{4}}
\put(60,0){\circle{4}}
\put(60,20){\circle*{4}}
\put(60,40){\circle*{4}}
\put(60,60){\circle*{4}}
\put(60,80){\circle*{4}}
\put(80,0){\circle*{4}}
\put(80,20){\circle*{4}}
\put(80,40){\circle*{4}}
\put(80,60){\circle*{4}}
\put(80,80){\circle*{4}}
\end{picture}
$$
The subgroup $S(X)$ is isomorphic to $\ZZ_2$. It consists of the identity automorphism and the automorphism reflecting the cone relative the diagonal. The automorphism group of $X$ is isomorphic to $\ZZ_2\rightthreetimes (\KK^\times)^2$.
\end{ex}

\section{Almost rigid toric varieties}
In this section we investigate one more class of toric varieties, for which it is possible to describe automorphism group. This group is not algebraic, but it is a semidirect product of four subgroups. Each of these four subgroups has an explicit description. 
\begin{definition}
A variety $X$ is called {\it almost rigid}, if it is not rigid and every two LNDs of $\KK[X]$ commute. 
\end{definition}
If $X$ is almost rigid, we denote the commutative group $\SAut(X)$ by $\mathcal{U}(X)$. Lemma~\ref{pzh} implies, that the variety $X$ is almost rigid if and only if kernels of all LNDs of $\KK[X]$ coincide.  
If we are given by an LND $\partial$, we can consider the following commutative subgroup in automorphism group $\mathbb{U}(\partial)=\{\exp (f\partial)\mid f\in \Ker \partial\}.$
Sometimes under the term "almost rigid variety" one mean such a variety $Y$, that all LNDs of $\KK[Y]$ are replicas of a fixed one $\partial$. In this case $\mathcal{U}(Y)=\mathbb{U}(\partial)$. But there exists an almost rigid in our terminology variety $X$ such that for each LND $\partial$ of $\KK[X]$ we have $\mathbb{U}(\partial)\subsetneq \mathcal{U}(X)$, see example ~\ref{spzh}.

\begin{theorem}\label{pz}
A toric variety $X$ is almost rigid if and only if there is a unique extremal ray  $\rho$ of the cone $\sigma$ such that the face $\widehat{\rho}$ is almost saturated.

\end{theorem}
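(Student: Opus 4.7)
The plan is to translate almost rigidity via Lemma~\ref{pzh} into the requirement that all LNDs of $\KK[X]$ share a common kernel, and then analyse these kernels via Lemma~\ref{eu}. For direction $(\Rightarrow)$ I would argue by contrapositive: if two distinct extremal rays $\rho_1,\rho_2$ have $\widehat{\rho_1},\widehat{\rho_2}$ almost saturated, Lemma~\ref{eu} supplies well-defined Demazure LNDs $\delta_{e_1},\delta_{e_2}$ whose kernels are the distinct $M$-graded subalgebras $K_{\rho_i}=\bigoplus_{m\in P\cap\widehat{\rho_i}}\KK\chi^m$, so Lemma~\ref{pzh} forces $\SAut(X)$ to be noncommutative, contradicting almost rigidity. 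Combined with the fact that a nonrigid variety has at least one almost saturated facet (Theorem~\ref{zhes} together with Lemma~\ref{eu}), this gives uniqueness.

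For the substantive direction $(\Leftarrow)$, assume $\widehat\rho$ is the unique almost saturated facet. Lemma~\ref{eu} makes $X$ nonrigid, and I aim to show every LND $\partial$ of $\KK[X]$ has kernel $K_\rho:=\bigoplus_{m\in P\cap\widehat\rho}\KK\chi^m$; Lemma~\ref{pzh} then yields almost rigidity. Decompose $\partial=\sum_{v\in V}\partial_v$ into $M$-homogeneous components, each of which is itself a well-defined homogeneous derivation of $\KK[X]$. The components at vertices of the Newton polytope $\mathcal{N}=\mathrm{conv}(V)$ are LNDs (recalled at the start of Section~\ref{lnd}); by the Demazure classification of $M$-homogeneous LNDs combined with Lemma~\ref{eu}, each such vertex LND is a scalar multiple of some $\delta_e$ with $e\in\mathscr{R}_\rho$, since $\rho$ is the only ray whose dual facet is almost saturated. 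Hence every vertex of $\mathcal{N}$ lies on the affine hyperplane $H=\{y\in M_\QQ:\langle y,n_\rho\rangle=-1\}$, and affineness of $H$ forces $V\subset\mathcal{N}\subset H$.

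From here two opposite inclusions finish the proof. For $K_\rho\subseteq\ker\partial$: given $m\in P\cap\widehat\rho$ and $v\in V$, one has $\langle m+v,n_\rho\rangle=-1<0$, so $m+v\notin P$; well-definedness of $\partial_v$ on $\KK[X]$ then forces the coefficient $c^v_m$ in $\partial_v(\chi^m)=c^v_m\chi^{m+v}$ to vanish, and $\partial(\chi^m)=\sum_v c^v_m\chi^{m+v}=0$. For $\ker\partial\subseteq K_\rho$: fix $f\in\ker\partial$ and a generic linear $\ell\colon M\to\ZZ$; then the $\ell$-top piece of $\partial$ is a single vertex LND $\partial_{v_{\max}}$ with kernel $K_\rho$, the $\ell$-top piece of $f$ is a scalar multiple of one monomial $\chi^{m_0}$, and extracting top $\ell$-degrees in $\partial(f)=0$ yields $\partial_{v_{\max}}(\chi^{m_0})=0$, so $m_0\in P\cap\widehat\rho$. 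Varying $\ell$ shows every vertex of $\mathrm{conv}(\mathrm{supp}\,f)$ lies in the face $\widehat\rho$, and convexity of $\widehat\rho$ then forces $\mathrm{supp}\,f\subset P\cap\widehat\rho$, so $f\in K_\rho$.

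The main obstacle I foresee is the classification input used above, namely that every nonzero $M$-homogeneous LND of the (possibly nonnormal) algebra $\KK[X]$ is a scalar multiple of some $\delta_e$. Additivity $c_{m+m'}=c_m+c_{m'}$ of the coefficient function makes $c$ linear on $P$, so $c_m=\langle m,\eta\rangle$ for some $\eta\in N_\KK$; the combined constraints that $c_m=0$ whenever $m+e\notin P$ and that $\partial$ is locally nilpotent need to be shown to pin $\eta$ down to a scalar multiple of the primitive vector $n_\rho$ on an extremal ray of $\sigma$, after which Lemma~\ref{eu} imposes almost saturation of $\widehat\rho$.
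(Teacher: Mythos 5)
Your proposal is correct and follows the paper's strategy almost exactly: the forward direction (two almost saturated facets yield two Demazure LNDs with distinct kernels, hence noncommutativity of $\SAut(X)$ via Lemma~\ref{pzh}, while zero such facets means rigidity) and the reduction of the backward direction to showing that every $M$-homogeneous component of an arbitrary LND $\partial$ pairs to $-1$ with $n_\rho$ are both the same as in the paper. The one genuine divergence is the final step. Having established that $\partial$ is homogeneous of degree $-1$ for the grading $\KK[X]_i=\bigoplus_{\langle m,n_\rho\rangle=i}\KK\chi^m$, the paper concludes $\Ker\partial=\KK[X]_0$ in one stroke: $\KK[X]_{-1}=0$ gives $\KK[X]_0\subset\Ker\partial$, and since $\mathrm{tr.deg.}\,\Ker\partial=n-1=\mathrm{tr.deg.}\,\KK[X]_0$ by \cite[Principle~11]{Fr} and kernels of LNDs are algebraically closed, equality follows. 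You instead prove the reverse inclusion $\Ker\partial\subset K_\rho$ by extracting leading terms with respect to a generic linear functional; this is correct and more elementary (it avoids Principle~11 entirely), at the cost of some extra combinatorial bookkeeping with Newton polytopes. Finally, the ``main obstacle'' you single out --- that every nonzero $M$-homogeneous LND of the nonnormal algebra $\KK[X]$ has a Demazure root as its degree --- is used by the paper as well, without further comment, as an import from Demazure's classification (such an LND extends to the integral closure $\bigoplus_{m\in P_{sat}}\KK\chi^m$, where the classification applies); so this is not a gap relative to the paper, though your instinct that it deserves an explicit justification in the nonnormal setting is sound.
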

\begin{proof}
If the cone $\sigma^\vee$ has not almost saturated faces of codimension one, then by Theorem~\ref{zhes} the variety $X$ is rigid. 
Assume the cone $\sigma^\vee$ has an almost saturated face $\rho$ of codimension one. By Lemma~\ref{eu} there is a Demazure root  $e\in{\mathscr R}_{\rho}$ such, that the LND $\delta_{e}$ is well defined. Then we have
$$\Ker \delta_{e}=\bigoplus_{m\in P\cap \widehat{\rho}}\KK\chi^m.$$ 
If there are at least two almost saturated faces of codimension 1, then there are two LNDs with different kernels. Therefore, $X$ is not almost rigid. 

Now assume that the cone $\sigma^\vee$ has a unique almost saturated face $\tau$ of codimension one. Let $\partial$ be an LND of $\mathbb{K}[X]$. Then $\partial$ can be decomposed onto a sum of $M$-homogeneous derivations. Vertices of the convex hull of degrees of summands are degrees of $M$-homogeneous LNDs, i.e. Demazure roots. All of them have degree $-1$ with respect to the grading corresponding to the face $\tau$. Hence, $\partial$ also has degree $-1$ with respectt to this grading. Therefore, $\mathrm{Ker}\,\partial$ contains the zero homogeneous component of this grading. By \cite[Principle~11]{Fr} the transcendence degree of $\Ker\,\partial$ over $\KK$ equals to the transcendence degree of $\KK[X]$ minus one. Hence $\Ker\,\partial$ coincides with the zero component. Hence, kernels of all LNDs coincide. That is $X$ is almost rigid.
\end{proof}

Let us describe the automorphism group of an almost rigid toric variety. 
Let $\widehat{\rho}$ be the unique almost saturated face of codimension one of the cone $\sigma^\vee$. It corresponds to the grading 
$$
\KK[X]=\bigoplus_{i\in\mathbb{Z}}\KK[X]_i, \ \ \ \ \text{where}\ \  \ \KK[X]_i=\bigoplus_{\langle m,n_\rho \rangle=i}\chi^m.
$$
Let us fix an invertible function $h\in\KK[X]^\times$. We define an automorphism $\varphi_h\colon \KK[X]\rightarrow \KK[X]$ by the following rule. If $f\in\KK[X]_k$ we have $\varphi_h(f)=h^kf$.  All invertible functions are in $\KK[X]_0$. Therefore, 
$H(X)=\{\varphi_h\mid h\in \KK[X]^\times\}$ is a commutative group.  The group $H(X)$ intersects with $T$ by the following one-dimensional torus $\Lambda=\{\varphi_c\mid c\in \KK^\times\}$. 
If we decompose $T$ onto a direct product $T=\Lambda \times \Omega$, where $\Omega\cong (\KK^\times)^{n-1}$, then the group generated by $H(X)$ and $T$ is a semiditect product $\Omega\rightthreetimes H(X)$.

\begin{theorem}\label{apz}
The automorphism group of an almost rigid toric variety $X$ has the following view:
$$\left(S(X)\rightthreetimes\Omega\right)\rightthreetimes \left(H(X)\rightthreetimes \mathcal{U}\left(X\right)\right).$$
\end{theorem}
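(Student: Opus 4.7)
The plan is to establish the claimed decomposition in two stages: first verify the iterated semidirect product structure among the four subgroups, then decompose an arbitrary $\psi\in\Aut(X)$ accordingly.

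\textbf{Structural relations.} The subgroup $\mathcal{U}(X)=\SAut(X)$ is normal in $\Aut(X)$ since it is generated by all $\mathbb{G}_a$-subgroups. The intersection $H(X)\cap\mathcal{U}(X)=\{\mathrm{id}\}$ holds because every $\varphi_h\in H(X)$ preserves the $\ZZ$-grading on $\KK[X]$ coming from $\widehat{\rho}$, while any nontrivial element of $\mathcal{U}(X)$ is built from exponentials of LNDs of degree $-1$ (as shown in the proof of Theorem~\ref{pz}) and so strictly lowers the degree of some homogeneous element; combined with the automatic normalization of $\mathcal{U}(X)$ by $H(X)$, this gives a bona fide semidirect product. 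By the paragraph preceding the theorem, $\Omega$ normalizes $H(X)$; and $\Omega\cap(H(X)\rightthreetimes\mathcal{U}(X))=\{\mathrm{id}\}$ because $\Omega\cap H(X)\subset\Omega\cap\Lambda=\{\mathrm{id}\}$ and a torus meets $\mathcal{U}(X)$ trivially. Finally, any $\alpha(\varphi)\in S(X)$ preserves the unique almost saturated face $\widehat{\rho}$ (since almost-saturation is intrinsic and there is only one such face), hence fixes $n_\rho$ and the $\ZZ$-grading; choosing a complement $\Omega$ of $\Lambda$ in $T$ compatible with $S(X)$, one checks $S(X)$ conjugates $\Omega$, $H(X)$, and $\mathcal{U}(X)$ into themselves and intersects them trivially by discreteness.

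\textbf{Decomposition of a general $\psi$.} The first key observation is $\psi(A_0)=A_0$, where $A_0=\bigoplus_{m\in P\cap\widehat{\rho}}\KK\chi^m$: for any LND $\delta$, the conjugate $\psi\delta\psi^{-1}$ is an LND with kernel $\psi(A_0)$, and almost rigidity forces this kernel to equal $A_0$. Using the action of $\psi$ on the character lattice $L\subset M$ of the units $\KK[X]^{\times}=\KK^{\times}\cdot\{\chi^l:l\in L\}$ together with the unique $\ZZ$-grading, I would construct a lattice automorphism $\varphi\in\mathrm{GL}(M)$ preserving $P$ such that $\alpha(\varphi)^{-1}\psi$ sends each $\chi^m$ to a scalar multiple of $\chi^m$ plus strictly lower-degree correction terms. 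The diagonal scaling can then be absorbed into a suitable $\omega\cdot\varphi_h\in\Omega\cdot H(X)$. The residual automorphism $\psi'$ fixes $A_0$ pointwise and satisfies $\psi'(\chi^m)=\chi^m+(\text{strictly lower degree})$; I would show $\psi'=\exp(D)\in\mathcal{U}(X)$ by taking $D=\log(\psi')=\sum_{k\geq 1}(-1)^{k+1}(\psi'-\mathrm{id})^k/k$ (well-defined since $\psi'-\mathrm{id}$ strictly lowers degree and hence is locally nilpotent), and then verifying, using almost rigidity, that $D$ is a derivation whose $M$-homogeneous components are Demazure root LNDs from Section~\ref{lnd}.

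The main obstacle is this last step: showing that the formal logarithm of a residual lower-triangular automorphism is a genuine derivation of $\KK[X]$. The crux is the sharp rigidity constraint from Theorem~\ref{pz}---all LNDs of $\KK[X]$ have common kernel $A_0$ and degree $-1$ with respect to the $\widehat{\rho}$-grading---so the higher-order Leibniz defects that appear when expanding $\psi'(fg)=\psi'(f)\psi'(g)$ cancel at the level of the logarithm, and the homogeneous components of $D$ are forced into the commutative span of Demazure LNDs, placing $\exp(D)$ in $\mathcal{U}(X)$ and completing the factorization.
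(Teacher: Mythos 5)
Your overall plan (strip off $S(X)$, then the torus/$H(X)$ part, then show the unipotent residue lies in $\mathcal{U}(X)$ via the logarithm) has the right shape, and some observations are genuinely useful --- in particular that any $\psi\in\Aut(X)$ must preserve the common kernel $A_0=\bigoplus_{m\in P\cap\widehat{\rho}}\KK\chi^m$ of all LNDs. But the step you single out as the main obstacle is not the real one: in characteristic zero it is standard that if $\psi'$ is an algebra automorphism with $\psi'-\mathrm{id}$ locally nilpotent, then $\log\psi'$ is a locally nilpotent derivation, and then $\exp(\log\psi')\in\SAut(X)=\mathcal{U}(X)$ by the very definition of $\SAut(X)$; no analysis of ``Leibniz defects'' or of the homogeneous components of $D$ is needed.

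The genuine gap is in the middle step. You propose to extract the lattice automorphism $\varphi$ (and the single-character leading term of $\psi(\chi^m)$) from ``the character lattice $L\subset M$ of the units $\KK[X]^\times$''. But for an almost rigid toric variety the cone $\sigma^\vee$ is typically pointed, so $\KK[X]^\times=\KK^\times$ --- this already happens in the paper's Example~\ref{sl} --- and then the units of $\KK[X]$ carry no information about $\varphi$ at all. Moreover, there is no a priori reason why the top $\widehat{\rho}$-degree part of $\psi(\chi^m)$ should be a scalar multiple of a single character: the graded piece $\KK[X]_k$ is spanned by many characters, and $T$ is not normal in $\Aut(X)$ in the almost rigid case, so the permutation-of-weight-spaces argument from Theorem~\ref{autzhes} is unavailable. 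The paper supplies exactly the missing ingredient by restricting to $X^{reg}$: by Lemma~\ref{co} and uniqueness of the almost saturated facet, $\gamma^\vee$ is a half-space, so $\KK[X^{reg}]\cong\KK[x_1,x_2^{\pm1},\dots,x_n^{\pm1}]$, whose unit group has rank $n-1$. There the subalgebra $B$ generated by units is automorphism-invariant, and over $\overline{\KK(x_2,\dots,x_n)}$ any automorphism acting trivially on $B$ sends $x_1\mapsto ax_1+b$ with $a$ a unit of the algebra --- this is what forces the monomial leading term and yields the four-factor decomposition after intersecting each factor with $\Aut(X)$. Without this reduction (or an equally effective substitute), your decomposition of a general $\psi$ does not get off the ground.
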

To prove this Theorem firstly we consider a particular case.
\begin{lemma}\label{pos}
Let $Y=\KK\times(\KK^\times)^{n-1}$, $\KK[Y]=\KK[x_1,x_2,x_2^{-1}, \ldots, x_n,x_n^{-1}]$. Then the automorphism group $\Aut(Y)$  is isomorphic to a semidirect product 
$$ \left(\GL_{n-1}(\ZZ)\rightthreetimes\Omega\right)\rightthreetimes \left(H(Y)\rightthreetimes \mathbb{U}\left(\frac{\partial}{\partial x_1}\right)\right).$$
\end{lemma}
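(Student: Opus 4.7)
The plan is to factor an arbitrary $\psi\in\Aut(\KK[Y])$ uniquely as a product of elements of the four prescribed subgroups, proceeding from the outer factor $\GL_{n-1}(\ZZ)\rightthreetimes\Omega$ inward. The argument rests on the identifications $\KK[Y]=R[x_1]$, where $R:=\KK[x_2^{\pm},\ldots,x_n^{\pm}]$ is a Laurent-polynomial domain, and $\KK[Y]^\times=R^\times=\KK^\times\cdot x_2^{\ZZ}\cdots x_n^{\ZZ}$, the latter equality holding because $x_1$ is not a unit.

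Since units map to units, $\psi(x_i)=c_i\prod_{j\ge 2}x_j^{a_{ij}}$ for each $i\ge 2$; running the same reasoning on $\psi^{-1}$ forces the matrix $A:=(a_{ij})_{i,j\ge 2}$ to lie in $\GL_{n-1}(\ZZ)$ and shows that $\psi$ restricts to an automorphism of $R$. Let $\tau\in\GL_{n-1}(\ZZ)\rightthreetimes\Omega$ be the element built from the monoid symmetry $\bigl(\begin{smallmatrix}1&0\\0&A\end{smallmatrix}\bigr)\in S(Y)$ together with the $\Omega$-scaling $x_i\mapsto c_ix_i$. Then $\psi':=\tau^{-1}\circ\psi$ fixes $x_2,\ldots,x_n$, hence is an $R$-algebra automorphism of $R[x_1]$. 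The classical structure theorem for automorphisms of a polynomial ring over a domain gives $\psi'(x_1)=hx_1+g$ with $h\in R^\times=\KK[Y]^\times$ and $g\in R$; a direct computation then yields $\psi'=\varphi_h\circ\exp(g\partial/\partial x_1)\in H(Y)\cdot\mathbb{U}(\partial/\partial x_1)$, completing the factorization $\psi=\tau\cdot\varphi_h\cdot\exp(g\partial/\partial x_1)$.

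It remains to verify the nested semidirect product structure. The identity $\varphi_h\circ\exp(g\partial/\partial x_1)\circ\varphi_h^{-1}=\exp(h^{-1}g\partial/\partial x_1)$ together with $H(Y)\cap\mathbb{U}(\partial/\partial x_1)=\{1\}$ yields $H(Y)\rightthreetimes\mathbb{U}(\partial/\partial x_1)$; normality of $\Omega$ in $\GL_{n-1}(\ZZ)\rightthreetimes\Omega$ is the standard monomial action; and $(\GL_{n-1}(\ZZ)\rightthreetimes\Omega)\cap(H(Y)\rightthreetimes\mathbb{U}(\partial/\partial x_1))=\{1\}$ because the latter fixes $x_2,\ldots,x_n$ while a nontrivial element of the former does not. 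The step requiring the most care is the normality of $H(Y)\rightthreetimes\mathbb{U}(\partial/\partial x_1)$ in $\Aut(Y)$: the uniqueness of the factorization above identifies this subgroup with the pointwise stabilizer of $\{x_2,\ldots,x_n\}$, and for any $\psi\in\Aut(Y)$ and $i\ge 2$ the element $\psi^{-1}(x_i)$ is a unit, hence lies in $R$, hence is fixed by every member of this stabilizer, so conjugation by $\psi$ preserves it. A minor bookkeeping remark is that the ``shear'' part of $S(Y)$ consisting of matrices $\bigl(\begin{smallmatrix}1&0\\ \ast&I\end{smallmatrix}\bigr)$ is already absorbed into $H(Y)$ as the lattice $\ZZ^{n-1}\subset H(Y)$ generated by $\varphi_{x_2},\ldots,\varphi_{x_n}$, which is why only $\GL_{n-1}(\ZZ)$, and not the full $S(Y)$, appears as the outer factor.
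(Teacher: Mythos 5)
Your proposal is correct and follows essentially the same route as the paper: identify the units of $\KK[Y]$, observe that the Laurent subalgebra $B=\KK[x_2^{\pm},\ldots,x_n^{\pm}]$ they generate is preserved, obtain the split surjection onto $\Aut(B)\cong\GL_{n-1}(\ZZ)\rightthreetimes\Omega$, and identify the kernel with $H(Y)\rightthreetimes\mathbb{U}(\partial/\partial x_1)$ via the affine form $x_1\mapsto hx_1+g$ of the remaining automorphisms (the paper deduces this form by passing to $\mathbb{L}[x_1]$ over the algebraic closure of $\KK(x_2,\ldots,x_n)$, you by citing the structure of $R$-automorphisms of $R[x_1]$ --- an immaterial difference). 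Your explicit verification of the conjugation relation in $H(Y)\rightthreetimes\mathbb{U}(\partial/\partial x_1)$ and the remark that the shear part of $S(Y)$ is absorbed into $H(Y)$ are welcome additions but do not change the argument.
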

\begin{proof}
Invertible elements of the algebra $A=\KK[x_1,x_2,x_2^{-1}, \ldots, x_n,x_n^{-1}]$ have the view $cx_2^{l_2}\ldots x_n^{l_n}$, where $l_i\in \ZZ$. The subalgebra generated by all invertible functions is 
$B=\KK[x_2,x_2^{-1}, \ldots, x_n,x_n^{-1}]$. This subalgebra is invariant under every automorphism.
It is known that the automorphism group of $B$ is isomorphic to $\GL_{n-1}(\ZZ)\rightthreetimes \Omega$, where $\Omega\cong (\KK^\times)^{n-1}$ is the torus, acting by multiplication of $x_2,\ldots, x_n$ by nonzero constants. Thus, the restriction of the action the automorphism group to $B$ gives a homomorphism $\Psi\colon \Aut(A)\rightarrow \GL_{n-1}(\ZZ)\rightthreetimes \Omega$. The homomorphism $\Psi$ is surjective. Moreover, there is a natural subgroup in $\Aut(A)$ preserving $x_1$, which maps on $\Aut(B)$ isomorphicly. Let us prove, that $\Ker\Psi=H(Y)\rightthreetimes\mathbb{U}(\frac{\partial}{\partial x_1})$. 
Take $\alpha\in\Ker\Psi$. Let $\mathbb{L}=\overline{\KK(x_2,\ldots, x_n)}$ be the algebraic closure of the field obtained by adjoining $x_2,\ldots,x_n$ to $\KK$. Then $\alpha$ induces an automorphism of $\mathbb{L}[x_1]$. Automorphism of the polynomial algebra in one variable $\mathbb{L}[x_1]$ have the view $x_1\mapsto ax_1+b$, where $a\in\mathbb{L}^\times$, $b\in \mathbb{L}$. This formula gives an autoharphism of $A$, if and only if $a$ is an invertible element of $A$. That is $a=cx_2^{l_2}\ldots x_n^{l_n}$, $l_i\in\ZZ$.
\end{proof}

\begin{proof}[Proof of Theorem~\ref{apz}]
By Lemma~\ref{co} we have 
$$\KK[X^{reg}]=\bigoplus_{m\in M\cap\gamma^\vee}\KK\chi^m.$$ 
Since $\sigma^{\vee}$ has a unique almost saturated face of codimension one, the cone $\gamma^{\vee}$ is a half-space. So, the algebra $\KK[X^{reg}]$ is isomorphic to $A=\KK[x_1,x_2,x_2^{-1}, \ldots, x_n,x_n^{-1}]$. Since the restriction of each automorphism $X$ to $X^{reg}$ gives an automorphism of $X^{reg}$ and different automorphisms of $X$ give different automorphisms of  $X^{reg}$, we have the following embedding $\Aut(X)\hookrightarrow\Aut(X^{reg})\cong\Aut(A)$. 
The subalgebra of $A$ generated by all invertible functions is 
$B=\KK[x_2,x_2^{-1}, \ldots, x_n,x_n^{-1}]$. It is invariant under each automorphism. We obtain a homomorphism $\rho\colon \Aut(X)\rightarrow \Aut(B)$. The automorphism group of $B$ is isomorphic to $\GL_{n-1}(\ZZ)\rightthreetimes\, \Omega$, where $\Omega\cong (\KK^\times)^{n-1}$ is a torus, acting by multiplication of $x_2,\ldots, x_n$ by nonzero constants. Therefore, we have a homomorphism $\eta\colon \mathrm{Im}\,\rho\rightarrow\GL_{n-1}(\ZZ)$. By definition the group $S(X)$ is the image of the homomorphism~$\eta$. Since the image of the homomorphism~$\rho$ contains $\Omega$, we have $\mathrm{Im}\,\rho=S(X)\rightthreetimes\, \Omega$.
The kernel of the homomorphism $\rho$ is the following intersection $\Aut(X)\cap \left(H(Y)\rightthreetimes \mathbb{U}(\frac{\partial}{\partial x_1})\right)$. So we obtain a homomorphism $\eta\colon \mathrm{Ker}\,\rho\rightarrow H(Y)$. We have $\mathrm{Ker}\,\eta=\Aut(X)\cap \mathbb{U}(\frac{\partial}{\partial x_1})$. Since all elements of  $\mathbb{U}(\frac{\partial}{\partial x_1})$ are exponents of some LNDs and there are no any more LNDs of $\KK[X^{reg}]$, we have $\mathrm{Ker}\,\eta=\mathcal{U}(X)$.  And $\mathrm{Im}\,\eta=\Aut(X)\cap H(Y)=H(X)$.

\end{proof}

\begin{ex}\label{sl}
Let us consider the toric variety $X$, corresponding to the monoid $P$, consisting of all integer points $(a,b)$ such, that $a\geq 0, b\geq 0$, exept all points $(2k+1,0)$. This variety is almost rigid, since the face $\mathbb{Q}_{\geq 0}(0,1)$ of $\sigma^\vee$ is almost saturated, and the face $\mathbb{Q}_{\geq 0}(1,0)$ is nowhere saturated. 
$$
\begin{picture}(120,70)
\put(110,60){$\sigma^\vee$}
\put(20,0){\vector(1,0){100}}
\put(20,0){\vector(0,1){75}}
\put(20,0){\circle*{4}}
\put(20,20){\circle*{4}}
\put(20,40){\circle*{4}}
\put(20,60){\circle*{4}}
\put(40,0){\circle{4}}
\put(40,20){\circle*{4}}
\put(40,40){\circle*{4}}
\put(40,60){\circle*{4}}
\put(60,0){\circle*{4}}
\put(60,20){\circle*{4}}
\put(60,40){\circle*{4}}
\put(60,60){\circle*{4}}
\put(80,0){\circle{4}}
\put(80,20){\circle*{4}}
\put(80,40){\circle*{4}}
\put(80,60){\circle*{4}}
\put(100,0){\circle*{4}}
\put(100,20){\circle*{4}}
\put(100,40){\circle*{4}}
\put(100,60){\circle*{4}}
\put(20,0){\vector(-1,1){20}}
\put(0,10){$e$}
\end{picture}
$$
The subgroup $S(X)$ is trivial for this variety. It is easy to see, that the subgroup $\mathcal{U}(X)$ has the view $\mathbb{U}(\delta_e)$, where $e=(-1,1)$ . The automorphism group of $X$ is isomorphic to $T\rightthreetimes \mathbb{U}(\delta_e)$.
\end{ex}

\begin{ex}\label{spzh}
Let us take the monoid $P$ from the preveos example and remove the point $(0,1)$. This removing does not change existing or absence of saturation points on faces. Hence, the variety $X$ is almost rigid.
$$
\begin{picture}(120,70)
\put(110,60){$\sigma^\vee$}
\put(20,0){\vector(1,0){100}}
\put(20,0){\vector(0,1){75}}
\put(20,0){\circle*{4}}
\put(20,20){\circle{4}}
\put(20,40){\circle*{4}}
\put(20,60){\circle*{4}}
\put(40,0){\circle{4}}
\put(40,20){\circle*{4}}
\put(40,40){\circle*{4}}
\put(40,60){\circle*{4}}
\put(60,0){\circle*{4}}
\put(60,20){\circle*{4}}
\put(60,40){\circle*{4}}
\put(60,60){\circle*{4}}
\put(80,0){\circle{4}}
\put(80,20){\circle*{4}}
\put(80,40){\circle*{4}}
\put(80,60){\circle*{4}}
\put(100,0){\circle*{4}}
\put(100,20){\circle*{4}}
\put(100,40){\circle*{4}}
\put(100,60){\circle*{4}}
\put(20,0){\vector(-1,1){20}}
\put(0,10){$e$}
\put(20,0){\vector(-1,2){20}}
\put(0,42){$e'$}
\end{picture}
$$
Subgroups $S(X)$, $\mathcal{U}(X)$, and $\mathrm{Aut}(X)$ are the same as in the previous example. But now not all LNDs are replicas of~$\delta_e$. Indeed, for example, for $e'=(-1,2)$ we have $\delta_{e'}=\chi^{(0,1)}\partial_e$. But $\chi^{(0,1)}$ is not a regular function on $X$. So, there is no LND $\delta$ such that $\mathcal{U}(X)$ coincides with~$\mathbb{U}(\delta)$. 
\end{ex}


\end{document}